\numberwithin{equation}{section}
\newtheorem{theorem}{Theorem}[section]
\newtheorem{lemma}[theorem]{Lemma}
\newtheorem{proposition}[theorem]{Proposition}
\newtheorem{definition}[theorem]{Definition}
\newtheorem{example}[theorem]{Example}
\newtheorem{remark}[theorem]{Remark}
\newcommand{\be}{\begin{equation}}
\newcommand{\ee}{\end{equation}}
\newcommand{\bes}{\begin{equation*}}
\newcommand{\ees}{\end{equation*}}
\newcommand{\cL}{\mathcal{L}}
\newcommand{\cA}{\mathcal{A}}
\newcommand{\lel}{\left\langle}
\newcommand{\rir}{\right\rangle}
\newcommand{\mb}[1]{\mathbb{#1}}
\newcommand{\arv}{H^2_d \otimes \mb{C}^r}
\begin{document}

\title[Stable division]{Stable polynomial division and essential normality of graded Hilbert modules}

\author{Orr Moshe Shalit}
\address{Department of Pure Mathematics, University of Waterloo}
\email{oshalit@math.uwaterloo.ca}



\subjclass[2000]{47A13; 46L07, 14Q99, 12Y05, 13P10}


\begin{abstract}
The purpose of this paper is to initiate a new attack on Arveson's resistant conjecture, that all graded submodules of the $d$-shift Hilbert module $H^2$ are essentially normal. We introduce the \emph{stable division property} for modules (and ideals): a normed module $M$ over the ring of polynomials in $d$ variables has the stable division property if it has a generating set $\{f_1, \ldots, f_k\}$ such that every $h \in M$ can be written as $h = \sum_i a_i f_i$ for some polynomials $a_i$ such that $\sum \|a_i f_i\| \leq C\|h\|$. We show that certain classes of modules have this property, and that the stable decomposition $h = \sum a_i f_i$ may be obtained by carefully applying techniques from computational algebra. We show that when the algebra of polynomials in $d$ variables is given the natural $\ell^1$ norm, then every ideal is linearly equivalent to an ideal that has the stable division property. We then show that a module $M$ that has the stable division property (with respect to the appropriate norm) is $p$-essentially normal for $p > \dim(M)$, as conjectured by Douglas. This result is used to give a new, unified proof that certain classes of graded submodules are essentially normal. Finally, we reduce the problem of determining whether all graded submodules of the $d$-shift Hilbert module are essentially normal, to the problem of determining whether all ideals generated by quadratic scalar valued polynomials are essentially normal.
\end{abstract}

\maketitle

\section{Introduction}

\subsection{The basic setup}
Let $\cA_d := \mb{C}[z_1, \ldots, z_d]$ be the algebra of complex polynomials in $d$ variables. We use the usual multi-index notation: if $\alpha = (\alpha_1, \ldots, \alpha_d) \in \mb{N}^d$ is a multi-index, then $|\alpha| = \alpha_1 + \ldots + \alpha_d$ and
\bes
z^\alpha = z_1^{\alpha_1} z_2^{\alpha_2} \cdots z_d^{\alpha_d}.
\ees
 
We denote by $\cA_d \otimes \mb{C}^r$ the finite multiplicity versions of $\cA_d$.  In this note we are interested in the case where there is some norm, always denoted  $\|\cdot\|$, on $\cA_d$. We will consider in detail two norms.

\emph{The natural $\ell_1$ norm:} For $p(z) = \sum_\alpha c_\alpha z^\alpha$ we define
\be\label{eq:l1}
\|p\| = \sum_\alpha |c_\alpha|.
\ee 

\emph{The $H^2$ norm:} 
We give $\cA_d$ an inner product by declaring that all monomials are orthogonal one to the other, and a monomial has norm
\be\label{eq:H2}
\|z^\alpha\|^2 = \frac{\alpha_1! \cdots \alpha_d!}{|\alpha|!}.
\ee
$H^2 = H^2_d$ will denote the Hilbert space obtained by completing $\cA_d = \mb{C}[z_1, \ldots, z_d]$ with respect to the above mentioned inner product. This space is also known as ``Symmetric Fock Space", or ``the Drury-Arveson" space.

$\cA_d$ has a natural grading that extends naturally to its finite multiplicity versions and to its completions with respect to the various norms.
We write the grading of 
$H^2_d$ as $H_0 + H_1 + H_2 + \ldots$. Thus, $H_k$ will also stand for the space of homogeneous polynomials of degree $k$.

A \emph{homogeneous ideal} (resp., \emph{module}) is an ideal (resp., module) generated by homogeneous polynomials. We say that $M$ is a \emph{graded submodule} of $\arv$ if it is the completion of a homogeneous module. Whenever $M \subseteq H^2_d \otimes \mb{C}^r$ is a graded submodule of $H^2_d \otimes \mb{C}^r$, we write the grading of $M$ as $M = M_0 + M_1 + M_2 + \ldots$.

\subsection{Stable polynomial division}

Let $M$ be a submodule of $\cA_d \otimes \mb{C}^r$ and let $\{f_1, \ldots, f_k\}$ be a generating set. Then every $h \in M$ can be written as a combination
\be\label{eq:h}
h = a_1 f_1 + \ldots + a_k f_k ,
\ee
with $a_i \in \cA_d$, $i=1, \ldots, k$. A natural question that arises is whether this can be done in such a way that that the terms $a_i f_i$ are controlled by the size of $h$. That is, does there exist a constant $C$ such that 
\be\label{eq:stablediv2}
\sum \|a_i f_i\|^2 \leq C \|h\|^2 
\ee
for all $h \in M$.


\begin{definition}
Let $M$ be a submodule of $\cA_d \otimes \mb{C}^r$. We say that $M$ has the \emph{stable division property} if there is a set $\{f_1, \ldots, f_k\} \subset  M$ that generates $M$ as a module, and there exists a constant $C$, such that for any polynomial $h \in M$ one can find $a_1, \ldots, a_k \in \cA_d$ such that
(\ref{eq:h})  and (\ref{eq:stablediv2}) hold. In this case, we also say that $M$ has stable division constant $C$. The set $\{f_1, \ldots, f_k\}$ is said to be a \emph{stable generating set} for $M$.
\end{definition}

\begin{remark}\emph{
A generating set for a module with the stable division property is not necessarily a stable generating set (see Example \ref{expl:non_stable_basis}).}
\end{remark}

\begin{remark}
\emph{When $M$ is a graded module it suffices to check (\ref{eq:h}) and (\ref{eq:stablediv2}) for $h$ homogeneous.}
\end{remark}

\begin{remark}\emph{
Note that condition (\ref{eq:stablediv2}) is equivalent to }
\be\label{eq:stablediv1}
\sum \|a_i f_i\| \leq C' \|h\| ,
\ee
\emph{when the finite set of generators is held fixed. }
\end{remark}

For an example of a module with the above property, note that any principal submodule of $\cA_d \otimes \mb{C}^r$ has the stable division property. On the other hand, we do not know whether or not there are submodules of $\cA_d \otimes \mb{C}^r$ that do not enjoy this property. Of greatest interest to our purposes is the case where $M$ is generated by \emph{homogeneous} polynomials, and we shall focus mainly on this case.

Although the literature contains some recent treatment of numerical issues arising in computational algebra (see, e.g., \cite{AFT,KSW,MT}) and although questions of effective computation in algebraic geometry have been considered for some time (see, e.g., this survey \cite{BM}), it does not seem that the problems with which we deal here have been addressed.

Below we will give some additional examples of modules with the stable division property. But before that, let us indicate some difficulties that arise in this context.

\begin{example}\label{ex:x^2+xy}\emph{
In the following discussion we will use some standard terminology from computational algebraic geometry (see the appendix for a review). Consider the ideal $I \subset \mb{C}[x,y]$ generated by the set $B = \{x^2 + 2xy, y^2\}$. One can check that $B$ is a Groebner basis for $I$. There is a  standard and well known algorithm that, given $h \in I$, finds coefficients $a_1, a_2 \in \cA_d$ such that $h = a_1 f_1 + a_2 f_2$ \cite[p. 63]{CLO92}. However, this division algorithm is not stable. For example, running the division algorithm on $x^{n+2}$ gives the output
\bes
x^{n+2} = \big[x^{n} - 2x^{n-1}y + 4x^{n-2}y^2 + \ldots + (-2)^{n}y^{n}\big](x^2 + 2xy) + \big[(-2)^{n+1}xy^{n-1} \big] y^2.
\ees
Thus, while the polynomials $x^{n+2}$ have norm $1$, running the division algorithm naively exhibits these polynomials as the sum of two terms of norm $\sim 2^n$. In particular, the division algorithm may be numerically unstable.}
\end{example}

Note that one may also write
\bes
x^{n+2} = \big[x^{n} - 2x^{n-1}y \big](x^2 + 2xy) + \big[4x^n \big] y^2. 
\ees
We will show below that in the two variable case, a slight modification of the above mentioned algorithm will always give the desired result. However, it is not clear whether it is possible to design an algorithm that will make the correct choices to produce optimal coefficients in the general $d$-variable case. In Section \ref{sec:H^2} and \ref{sec:ell1} we treat specific classes of modules for which we can show that the stable division property holds. We will show that with respect to the $H^2$ norm ideals generated by linear polynomials, arbitrary ideals in $\mb{C}[x,y]$, finite dimensional ideals, as well as modules generated by monomials, have the stable division property. These classes of modules can be seen (using the same proofs) to have the stable division property with respect to the $\ell^1$ norm too, but with respect to the $\ell^1$ norm we in fact show that every ideal is linearly equivalent to an ideal that has the stable division property.

\subsection{The $d$-shift and essential normality}
We now explain the reason that brought us to study stable division.

On $\arv$ we may define natural multiplication operators $Z_1, \ldots, Z_d$ as follows:
\bes
Z_i f(z) = z_i f(z) \,\, , \,\, f \in \arv.
\ees
The $d$-tuple $(Z_1, \ldots, Z_d)$ is known as \emph{the $d$-shift}, and has been studied extensively in \cite{Arv98} and since. Arveson showed that the commutators $[Z_i,Z_j^*]$ belong to the Schatten class $\cL^p$ for all $p>d$, thus, in particular, they are compact. This is significant - see \cite{Arv98} for ramifications.

 Given a graded submodule $M \subseteq \arv$, one may obtain two other $d$-tuples by compressing $(Z_1, \ldots, Z_d)$ to $M$ and to $M^\perp$:
 \bes
 (A_1, \ldots, A_d) = \left(Z_1\big|_M, \ldots, Z_d\big|_M\right) \,,
 \ees
 and
\bes
 (B_1, \ldots, B_d) = \left(P_{M^\perp}Z_1\big|_{M^\perp}, \ldots, P_{M^\perp}Z_d\big|_{M^\perp}\right)\,.
\ees
If $[A_i,A_j^*] \in \cL^p$ for all $i,j$ then $M$ is said to be $p$-essentially normal, and if $[A_i,A_j^*]$ is compact for all $i,j$ then $M$ is said to be essentially normal. Similarly, the quotient $\arv /M$ is said to be $p$-essentially normal (resp. essentially normal) if the commutators $[B_i,B_j^*]$ are all in $\cL^p$ (resp. compact). 

Arveson conjectured that every graded submodule $M$ of $\arv$, as well as its quotient $\arv /M$, are $p$-essentially normal for $p>d$ \cite{Arv05}. This has been verified for modules generated by monomials \cite{Arv05,Doug06}, and also for principal modules as well as arbitrary modules in dimensions $d=2,3$ \cite{GuoWang}. Douglas conjectured further that $\arv/M$ is $p$ essentially normal for all $p>\dim(M)$ \cite{Doug06b}. This has also been verified in several cases. We will not discuss here the varied and important consequences of this conjecture (see \cite{Arv05,Arv07,Doug06,Doug06b,GuoWang}).

In Section \ref{sec:stab_div_ess_nor} we will show that every module that has the stable division property satisfies Douglas' refinement of Arveson's conjecture. Thus, having the results of Sections \ref{sec:H^2} and \ref{sec:ell1} at hand, we obtain a unified proof that principal modules, monomial modules, and arbitrary ideals in $\mb{C}[x,y]$ are $p$-essentially normal for $p>d$, and that their quotients are $p$-essentially normal for $p> \dim(M)$.

\subsection*{Acknowledgments.} The author would like to thank Ken Davidson, J\"org Eschmeier and Chris Ramsey for reading and discussing preliminary versions of these notes. Moreover, the generous and warm hospitality provided by Ken Davidson at the University of Waterloo is greatly appreciated.

\section{Stable division with respect to the $H^2$ norm}\label{sec:H^2}

In this section $\|\cdot\|$ denotes the $H^2$ norm given by (\ref{eq:H2}), though the results here can be shown to be true also for other natural norms, in particular for the $\ell_1$ norm. The following is the simplest example.

\begin{proposition}\label{prop:orthogonal}
Let $I = I_1 + I_2 + \ldots$ be a homogeneous ideal in $\cA_d$ generated  by an orthonormal set $\{f_1, \ldots, f_k\}$ of linear polynomials. For every $n\geq 1$, every $g \in I_n$ can be written as $g = a_1 f_1 + \ldots + a_k f_k$, where $a_i \in H_{n-1}$ for $i=1, \ldots, k$ in such a way that $a_m f_m \perp  a_{j} f_{j}$ for all  $i \neq j$. In particular, $I$ has the stable division property.
\end{proposition}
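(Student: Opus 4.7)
The plan is to reduce, via a unitary change of linear coordinates, to the special case $\{f_1, \ldots, f_k\} = \{z_1, \ldots, z_k\}$, and then construct the decomposition by partitioning monomials according to their smallest ``active'' index in $\{1, \ldots, k\}$.

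First I would extend $\{f_1, \ldots, f_k\}$ to an orthonormal basis $\{f_1, \ldots, f_d\}$ of the linear-polynomial space $H_1$ (which is isometric to $\mb{C}^d$ with its standard inner product), and take the unitary $U$ of $\mb{C}^d$ with $U^* f_i = z_i$ for $i = 1, \ldots, k$. The $H^2$ norm is invariant under such unitary coordinate changes: the second quantization $\Gamma(U) = \bigoplus_n \mathrm{Sym}^n(U)$ is a unitary of $H^2_d$ preserving each homogeneous component $H_n$ and satisfying $\Gamma(U)\, M_{f_i}\, \Gamma(U)^* = M_{z_i}$. (This is standard, following from unitary invariance of the reproducing kernel $(1 - \langle z, w\rangle)^{-1}$, or equivalently from the identification $H^2_d = \bigoplus_n \mathrm{Sym}^n(\mb{C}^d)$.) Hence it suffices to prove the proposition for the ideal $\widetilde I = (z_1, \ldots, z_k)$, and then transport the resulting decomposition back via $\Gamma(U)^*$.

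So let $\widetilde g \in \widetilde I_n$. Every monomial $z^\alpha$ appearing in $\widetilde g$ satisfies $\alpha_j \geq 1$ for some $j \leq k$, since $\widetilde I_n$ contains no monomial purely in $z_{k+1}, \ldots, z_d$. To each such $\alpha$ I assign the index $j_0(\alpha) := \min\{j \leq k : \alpha_j \geq 1\}$ and split $\widetilde g = \widetilde g_1 + \cdots + \widetilde g_k$, where $\widetilde g_j$ is the sum of all monomials of $\widetilde g$ with $j_0 = j$. Because the supports $\{\alpha : j_0(\alpha) = j\}$ are pairwise disjoint and the monomial basis of $H^2_d$ is orthogonal, the pieces $\widetilde g_j$ are pairwise orthogonal. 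By construction each monomial in $\widetilde g_j$ is divisible by $z_j$, so $\widetilde a_j := \widetilde g_j / z_j$ lies in $H_{n-1}$ and $\widetilde g = \sum_j z_j \widetilde a_j$ with orthogonal summands. Setting $a_j := \Gamma(U)^* \widetilde a_j \in H_{n-1}$ then yields $g = \sum_j f_j a_j$ with $\langle f_i a_i, f_j a_j\rangle = \langle z_i \widetilde a_i, z_j \widetilde a_j\rangle = 0$ for $i \neq j$, and Pythagoras delivers $\|g\|^2 = \sum_j \|f_j a_j\|^2$, so in fact $I$ has stable division constant $C = 1$.

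I expect the only subtle point to be cleanly justifying the unitary invariance step, in particular that the change of linear coordinates really extends to an isometry of the full $H^2$ norm and intertwines $M_{f_i}$ with $M_{z_i}$; once this is granted, the combinatorial sorting of monomials by smallest active index produces the orthogonal decomposition with no further work, and the final norm identity is just the Pythagorean theorem.
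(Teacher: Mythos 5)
Your proof is correct and takes essentially the same route as the paper: reduce by unitary invariance to the case $f_i = z_i$ (the paper cites the corollary to Proposition 1.12 of Arveson's paper for this), then partition the monomials of $g$ by their smallest active index in $\{1,\ldots,k\}$; the paper phrases this last step as an induction (peel off the $z_1$-divisible monomials, then recurse), which is the same decomposition you obtain in one pass.
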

\begin{proof}
We may assume that $f_i = z_i$, the first coordinate function, for $i=1,2,\ldots,k$. (see the corollary to Proposition 1.12 in \cite{Arv98}). Every polynomial $g$ is a sum of monomials of degree $n$. Take all monomials that have $z_1$ in them, and gather them up as $a_1 f_1$. All the remaining monomials in $g-a_1 f_1$ do not have $z_1$ in them, so they are orthogonal to $a_1f_1$. Proceeding inductively we are done. 
\end{proof}

We note that the conclusion in the above proposition does not hold if $\{f_1, \ldots, f_k\}$ is an orthonormal set of linear, vector valued polynomials in $H_d^2 \otimes \mb{C}^r$.

\subsection{Monomial modules}

A \emph{monomial} is a polynomial of the form $z^\alpha \otimes \xi$, with $\alpha$ a multi-index and $\xi \in  \mb{C}^r$ (note that this definition of monomial is more general than that given in \cite{CLO98}).

\begin{proposition}\label{prop:monomials}
Let $M \subset \cA_d \otimes \mb{C}^r$ be a module that is generated by monomials. Then $M$ has the stable division property. Moreover, the constant $C$ in (\ref{eq:stablediv2}) can be chosen to be $1$.
\end{proposition}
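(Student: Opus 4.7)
The plan is to exploit the orthogonality of distinct $z$-monomials under the $H^2$ norm and reduce the stable division inequality to a linear-algebraic condition on each graded piece of $M$. Since $M$ is generated by monomials, for each multi-index $\beta$ the set $V_\beta := \{v \in \mb{C}^r : z^\beta \otimes v \in M\}$ is a subspace of $\mb{C}^r$ satisfying $V_\beta \subseteq V_{\beta+\gamma}$ for all $\gamma$, and $M$ decomposes as the orthogonal direct sum $M = \bigoplus_\beta z^\beta \otimes V_\beta$. Any $h \in M$ takes the form $h = \sum_\beta z^\beta \otimes w_\beta$ with $w_\beta \in V_\beta$, and $\|h\|^2 = \sum_\beta \|z^\beta\|^2 \|w_\beta\|^2$.

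The crux is to select a finite generating set $\{f_i = z^{\alpha_i} \otimes \xi_i\}_{i=1}^N$ of monomials satisfying the frame-type condition
\[
\sum_{i \,:\, \alpha_i \leq \beta} \frac{\xi_i \xi_i^*}{\|\xi_i\|^2}\ \geq\ P_{V_\beta} \quad \text{for every } \beta,
\]
which is equivalent, at each $\beta$, to the statement that every $w \in V_\beta$ admits a decomposition $w = \sum_i c_i \xi_i$ (over $i$ with $\alpha_i \leq \beta$) with $\sum_i |c_i|^2 \|\xi_i\|^2 \leq \|w\|^2$ (realized by the Moore--Penrose pseudoinverse). Starting from any finite monomial generating set, available by Noetherianity, I would augment as follows: the family $(V_\beta)$ takes only finitely many distinct values, each realized at finitely many minimal indices by Dickson's lemma, and at each such minimal index I add the finitely many monomials $z^\beta \otimes v$ with $v$ an orthonormal basis of $V_\beta$. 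These monomials already lie in $M$, so the augmentation does not enlarge $M$; monotonicity of the left-hand side of the frame inequality in $\beta$ then propagates the condition to all $\beta$.

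Given the augmented generating set, for each $\beta$ I pick the pseudoinverse coefficients $(c_{i,\beta})_{\alpha_i \leq \beta}$ as above, define $a_i := \sum_\beta c_{i,\beta} z^{\beta - \alpha_i}$, and compute, using orthogonality of distinct $z$-monomials under the $H^2$ norm,
\[
\sum_i \|a_i f_i\|^2 = \sum_\beta \|z^\beta\|^2 \sum_{i \,:\, \alpha_i \leq \beta} |c_{i,\beta}|^2 \|\xi_i\|^2 \ \leq\ \sum_\beta \|z^\beta\|^2 \|w_\beta\|^2 = \|h\|^2.
\]
Since $h = \sum_i a_i f_i$ by construction, this establishes the stable division property with $C = 1$.

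The main obstacle is the augmentation step and the verification that the frame inequality survives at every $\beta$; the remainder is orthogonal bookkeeping. In the scalar case $r = 1$ no augmentation is needed since each $V_\beta \in \{0, \mb{C}\}$, and the argument collapses to a partition of $h$'s monomial terms among generators that divide them, with $z$-monomial orthogonality giving $C = 1$ directly.
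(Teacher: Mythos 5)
Your proposal is correct, and the route is genuinely different from the paper's. The paper takes a finite monomial generating set of a single degree $m$ (after Gram--Schmidt within each multi-index), and argues by induction on the number of generators: it splits $h = g + (h-g)$, where $g$ is obtained by projecting the $\mb{C}^r$-components of $h$ (over $\beta \geq \alpha_1$) onto $W = \mathrm{span}\{\xi_i : \alpha_i = \alpha_1\}$, observes $g \perp h-g$, and invokes the inductive hypothesis on $h-g$. You instead work monomial-by-monomial: you use the orthogonal decomposition $M = \bigoplus_\beta z^\beta \otimes V_\beta$, reduce everything to the finite-dimensional frame inequality $\sum_{\alpha_i \leq \beta} \xi_i\xi_i^*/\|\xi_i\|^2 \geq P_{V_\beta}$, verify that the Moore--Penrose coefficients deliver the bound with constant $1$, and enforce the frame inequality by augmenting the generating set with orthonormal bases of $V_\beta$ at the (Dickson-finite) minimal indices where $V_\beta$ attains each of its finitely many values. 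Two remarks. First, your equivalence between the operator inequality and the existence of a contractive synthesis is exactly the standard pseudoinverse argument ($\langle (SS^*)^{-1}w,w\rangle \le \|w\|^2 \Longleftrightarrow SS^* \geq I_V$), and it is sound. Second, and worth noting: the augmentation step you carry out is doing real work that the paper's argument glosses over. In the paper's inductive step, the claim that $h-g$ lies in the module generated by $\{z^{\alpha_i}\otimes\xi_i : \alpha_i \neq \alpha_1\}$ is \emph{not} automatic for an arbitrary monomial generating set: e.g.\ with $M = \langle z_1 \otimes e_1,\, z_2\otimes(e_1+e_2)\rangle \subset \cA_2\otimes\mb{C}^2$, $\alpha_1 = (1,0)$, $W=\langle e_1\rangle$, and $h = z_1z_2\otimes(e_1+e_2)$, one gets $h-g = z_1z_2\otimes e_2$, which is \emph{not} in $\langle z_2\otimes(e_1+e_2)\rangle$. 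The needed containment requires precisely the kind of ``richness'' of the generating family (orthonormal bases of the $V_\beta$'s sitting inside it at the right indices) that your augmentation guarantees. So your proof is not only correct but, in this respect, more careful and more transparent about what property of the generating set actually makes $C = 1$ achievable.
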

\begin{proof}
By Hilbert's Basis Theorem, there is some $m$ and a finite family $B = \{z^{\alpha_i} \otimes \xi_i\}_{i=1}^k \subseteq M_m$ 
that generates $M_m + M_{m+1} + \ldots $. 
A Graham-Schmidt orthogonalization procedure puts us in the situation where whenever $\alpha_i = \alpha_j$ then $\xi_i \perp \xi_j$. 
Throwing in finite orthonormal bases of $M_1, \ldots, M_{n-1}$ allows us to restrict attention to stable division in $M_m + M_{m+1} + \ldots$, so let us assume that $B$ generates $M$. Under these assumptions, we proceed by induction on $k$.

We have already noted that a principal submodule has the stable division property, so if $k = 1$ we are done. 

Now let $k>1$, and fix $h \in M_n$, $n \geq m$. $h$ can be written as a sum of monomials
\bes
h = \sum_{|\beta| = n} z^\beta \otimes \eta_\beta .
\ees
We re-label the set $\{z^{\alpha_i} \otimes \xi_i\ | \alpha_i = \alpha_1\}$ as $\{z^{\alpha_1} \otimes \zeta_j\}_{j=1}^t$. Remember that by our assumptions, $\{\zeta_1, \ldots, \zeta_t\}$ is an orthonormal set. Let $W = \textrm{span}\{\zeta_1, \ldots, \zeta_t\}$. Put 
$$S(\alpha_1) = \{\beta : |\beta| = n \, , \, \beta \geq \alpha_1 \}.$$
For all $\beta \in S(\alpha_1)$, $\eta_{\beta} = v_\beta + u_\beta$, with $v_\beta \in W$ and $u_\beta \in W^\perp$. Define
\bes
g = \sum_{\beta \in S(\alpha_1)} z^\beta \otimes v_\beta .
\ees
$g$ is in the module generated by $\{z^{\alpha_i} \otimes \xi_i\ | \alpha_i = \alpha_1\}$. Writing $v_\beta = \sum_{j=1}^t c^\beta_j \zeta_j$, we find that 
\bes
g = \sum_{j=1}^t \left( \sum_{\beta \in S(\alpha_1)} c^\beta_j z^{\beta-\alpha_1} \right) z^{\alpha_1} \otimes \zeta_j ,
\ees
so that gives $g = \sum_j a_j z^{\alpha_1} \otimes \zeta_j$ with $\sum_j \|a_j z^{\alpha_1}\otimes \zeta_j\|^2 \leq \|g\|^2$.
Now, $g \perp h-g$, and $h-g$ is in the module generated by $\{z^{\alpha_i} \otimes \xi_i | \alpha_i \neq \alpha_1\}$. By the inductive hypothesis, we can find a set of polynomials $\{b_i\}$ such that
\bes
h-g = \sum_{\alpha_i \neq \alpha_1} b_i z^{\alpha_i} \otimes \xi_i
\ees
and $\sum \|b_i z^{\alpha_i} \otimes \xi_i\|^2 \leq \|h-g\|^2$. Thus 
\bes
h = \sum_j a_j z^{\alpha_1} \otimes \zeta_j + \sum_{\alpha_i \neq \alpha_1} b_i z^{\alpha_i} \otimes \xi_i
\ees
with 
\bes
\sum\|a_j z^{\alpha_1} \otimes \zeta_j  \|^2 + \sum \| b_i z^{\alpha_i} \otimes \xi_i\|^2 \leq \|h\|^2.
\ees
\end{proof}

\subsection{Ideals in $\mb{C}[x,y]$}

We now consider the case of two variables, that is, $d=2$. 

\begin{lemma}\label{lem:stab2}
Let $f_1, \ldots, f_k$ be homogeneous polynomials of the same degree $m$ in $\mb{C}[x,y]$ such that $LT(f_1) > LT(f_2) > \ldots > LT(f_k)$. There is a constant $C$ such that for every polynomial $h \in \mb{C}[x, y]$, division of $h$ by $(f_1,\ldots,f_k)$ gives a representation  
\bes
h = a_1 f_1 + \ldots + a_k f_k + r,
\ees
with 
\be\label{eq:stabdivr}
\sum_i \|a_i f_i\|^2 \leq C (\|h\|^2 + \|r\|^2) ,
\ee
where $a_i, r \in \cA_d$, and either $r = 0$ or $r$ is a linear combination of monomials, non of which is divisible by any of $LT(f_1), \ldots, LT(f_k)$.
\end{lemma}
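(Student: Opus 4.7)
The plan is to execute a modified division algorithm tailored to the two-variable setting, and then bound the coefficient blow-up uniformly. Because division and the $H^2$ norm both split across homogeneous components, I first reduce to the case where $h \in \cA_d$ is homogeneous of some degree $n \ge m$: if the bound holds componentwise with a uniform constant, summing the homogeneous pieces (with their own remainders $r_n$) yields the bound for arbitrary $h$.

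Next, I describe the modified algorithm. Since the $f_i$ are homogeneous of degree $m$ in two variables, $LT(f_i) = x^{a_i} y^{m-a_i}$ with $a_1 > a_2 > \ldots > a_k$. The standard division algorithm in \cite{CLO92} always reduces the current leading monomial using the \emph{first} $f_i$ whose leading term divides it — this is precisely what produces the exponential blow-up in Example \ref{ex:x^2+xy}. My modification is to use, at each step, the generator $f_{i^*}$ with the \emph{largest} admissible index (equivalently, the smallest admissible $a_{i^*}$). In the example with $h = x^{n+2}$, this switches to $f_2 = y^2$ as soon as the running remainder contains a monomial $x^n y^2$, yielding the stable decomposition exhibited in the remark immediately following Example \ref{ex:x^2+xy}.

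The heart of the argument is then a stability estimate. The crucial structural fact, special to two variables, is that when I reduce by $f_{i^*}$ all non-leading terms of $f_{i^*}$ carry $x$-exponents strictly less than $a_{i^*}$; hence the new monomials introduced have $x$-exponent strictly less than the current leading $x$-exponent $A$ and $y$-exponent strictly greater than $B$. This means that along the ``trajectory'' of a single initial monomial of $h$, the sequence of leading $x$-exponents is strictly decreasing and bounded below by $a_k$, so only $O(m)$ further reductions can be triggered by each such monomial before landing in the remainder $r$. I would encode the algorithm as a linear map $T_n\colon H_n \to H_{n-m}^{\,k} \oplus R_n$, where $R_n \subseteq H_n$ is the span of normal-form monomials, and track the norm of $T_n$ by combining (i) a per-step ``reduction bound'' that compares $\|c \cdot x^{A-a_{i^*}} y^{B-(m-a_{i^*})} f_{i^*}\|^2$ to $|c|^2 \cdot \|x^A y^B\|^2$ using the weights $\|x^a y^b\|^2 = \tfrac{a!\,b!}{(a+b)!}$, with (ii) the bounded depth of each trajectory. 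The weights conspire favorably: moving a monomial from $x^A y^B$ to a neighbouring $x^{A-1}y^{B+1}$ changes the norm by a factor of $(B+1)/A$, which is $\Theta(1)$ exactly in the regime where the algorithm branches.

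The main obstacle I anticipate is making the per-step reduction constant genuinely independent of $n$. Naively one would worry that the non-leading terms of $f_{i^*}$, after multiplication by a large monomial factor, are heavier in the weighted $H^2$ norm than the leading monomial being cancelled. One has to match these multinomial weights against the $O(m)$-bounded trajectory length and the uniform bound on the ratios of weights of ``neighbouring'' two-variable monomials, yielding an overall constant depending only on $f_1,\ldots,f_k$. Since the estimate only needs to control $\sum \|a_i f_i\|^2$ in terms of $\|h\|^2 + \|r\|^2$ — and the remainder $r$ collects exactly the monomials on which no reduction occurs — the unreduced content of $h$ is charged to $\|r\|^2$ rather than to the coefficients, which is precisely what makes the bound achievable with the chosen algorithm.
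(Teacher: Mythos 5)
Your reduction to homogeneous $h$, your choice of algorithm (always reduce by the generator of largest admissible index, equivalently smallest admissible leading $x$-exponent), and your per-step weight calculation ($\|x^{A-1}y^{B+1}\|^2/\|x^A y^B\|^2 = (B+1)/A$) are all correct and match the paper. However, there is a genuine gap in the assertion that ``only $O(m)$ further reductions can be triggered by each such monomial before landing in the remainder.'' A trajectory does not terminate in $O(m)$ steps: the leading $x$-exponent strictly decreases by at least one per reduction, and the trajectory continues until the $x$-exponent drops below the smallest leading $x$-exponent among the $f_i$. Since the starting $x$-exponent can be close to $n = \deg h$, a trajectory can have length $\Theta(n)$. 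Concretely, with $f_1 = x^2$, $f_2 = xy + y^2$ and $h = x^{n-1}y$, the algorithm you describe reduces $x^{n-1}y \to -x^{n-2}y^2 \to x^{n-3}y^3 \to \cdots$ through roughly $n$ steps, all by $f_2$. Your plan of combining a per-step bound with bounded depth therefore does not close, and the remark that ``the unreduced content of $h$ is charged to $\|r\|^2$'' is not backed by any actual estimate.

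The paper avoids this by inducting on the number of generators $k$, not on trajectory depth. Since the leading $y$-exponent of $p$ strictly increases at each iteration and $f_1$ is selected only while that $y$-exponent is strictly below $j_2$ (the leading $y$-exponent of $f_2$), the generator $f_1$ is used in at most $j_2 - j_1$ iterations, a bound independent of $n$. Your per-step estimate, applied only over these boundedly many iterations, gives $\|a_1 f_1\| \le C\|h\|$ and $\|p\| \le C\|h\|$; thereafter the run coincides with dividing the current $p$ by $(f_2,\ldots,f_k)$, and the inductive hypothesis applies. The base case $k=1$ is precisely where your $\Theta(n)$-long ``last phase'' lives, and it is dispatched trivially by $a_1 f_1 = h - r$, so $\|a_1 f_1\|^2 \le 2(\|h\|^2 + \|r\|^2)$. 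This base case is where the $\|r\|^2$ term on the right of (\ref{eq:stabdivr}) is genuinely used, and it is also why the estimate cannot be strengthened to $\sum_i \|a_i f_i\|^2 \le C\|h\|^2$. To repair your proof, replace the per-monomial $O(m)$-depth bookkeeping with this induction on $k$.
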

\begin{proof}
Note that we need only consider homogeneous $h$ - otherwise we apply the result to the homogeneous  components of $h$. We may also assume that $\deg h > 4m$.

We will use {\bf Algorithm I} from Appendix \ref{subsec:div_alg} for the division, where in step (\ref{it:choice}) we will choose $i_0 = \max I$. What remains to show will be proved by showing that the output of the algorithm described in Appendix \ref{subsec:div_alg} satisfies the required conditions, once the input is arranged so that $LT(f_1) > LT(f_2) > \ldots > LT(f_k)$, and as long $i_0$ is chosen as above.

The only change from the algorithm given in \cite[p. 63]{CLO92} is the specification of the $f_i$ that is used to reduce $p$ in step (\ref{it:reduce}). The correctness of this algorithm is proved in \cite{CLO92} and is independent of the choice of the dividing $f_i$ in step (\ref{it:reduce}). It remains to prove that there exists $C$ such that (\ref{eq:stabdivr}) holds.

The proof is by induction on $k$ - the number of the $f_i$'s given. If $k=1$ the result is trivial.
Assume that $k>1$. Write  $f_i = \sum_{j=0}^m a_{ij} x^{m-j}y^j$, and for all $i$, put $j_i = \min\{j | a_{ij} \neq 0\}$. By assumption, $j_1 < j_2 < \ldots < j_k$.

Recall that we may assume that $\deg h = n > 4m$. From the definition of the algorithm it follows that $f_1$ will be used in step (\ref{it:reduce}) to divide $p$ only when the leading term of $p$ is of the form $b_t x^{n-t}y^t$, with $b_t \neq 0$ and $j_1 \leq t < j_2$.
By the triangle inequality, at every iteration in which $a_1$ changes, the quantity $\|a_1 f_1\|$ grows by at most $\|LT(p)/LT(f_1) f_1\|$. 

\noindent{\bf Claim:} $\|LT(p)/LT(f_1) f_1\|^2 \leq |a_{1j_1}|^{-1} \|LT(p)\|^2 \sum_j |a_{1j}|^2 $.

\noindent{\bf Proof of Claim:} 
\begin{align*}
LT(p)/LT(f_1) f_1 &= \frac{b_t}{a_{1j_1}} x^{n-t-(m-j_1)}y^{t-j_1} \sum_j a_{1j} x^{m-j}y^j \\
&= \sum_j a_{1j}  \frac{b_t}{a_{1j_1}} x^{n-t-(j-j_1)}y^{t+j-j_1}.
\end{align*}
Thus, by the definition of the norm in $H^2_2$,
\bes
\|LT(p)/LT(f_1) f_1\|^2 =  \left|\frac{b_t}{a_{1j_1}}\right|^2 \sum_{j} |a_{1j}|^2 \frac{(n-(t+j-j_1))!(t+j-j_1)!}{n!}
\ees
But $t \leq (t+j-j_1) < n/2$, and for integers $i,j$ such that $i \leq j<n/2$ we have
\bes
\frac{(n-j)!j!}{n!} \leq \frac{(n-i)!i!}{n!},
\ees
so
\begin{align*}
\|LT(p)/LT(f_1) f_1\|^2 &\leq \left|\frac{b_t}{a_{1j_1}}\right|^2 \sum_j |a_{1j}|^2 \frac{(n-t))!t!}{n!} \\
&=  |a_{1j_1}|^{-1} \|LT(p)\|^2 \sum_j |a_{1j}|^2 .
\end{align*}
That establishes the claim.

Now, we have seen that at every step of the iteration where $a_1$ changes, the quantity $\|a_1 f_1\|$ grows by as most 
$(\sum_j |a_{1j}|^2 \|LT(p)\|^2)^{1/2}$. At every such iteration, $\|p\|$ also grows by at most $(\sum_j |a_{1j}|^2 \|LT(p)\|^2)^{1/2}$. At the iterations where $a_1$ does not change, $\|p\|$ becomes smaller. 

It follows that after at most $j_2$ iterations, we have the following situation: 
\begin{enumerate}
\item $\|a_1 f_1\| \leq C \|h\|$.
\item $\|p\| \leq C \|h\|$.
\item $r$ is something.
\item $a_2 = \ldots a_k = 0$.
\end{enumerate}
Here $C$ is a constant that depends only on $\sum_j |a_{1j}|^2$ and $j_2$. From this stage on, the algorithm continues to divide $p$ by $f_2, \ldots, f_k$. It will find the same $a_2, \ldots, a_k$ that it would given $p$ instead of $h$ as input, and it would add to $r$ a remainder that is orthogonal to the remainder $r$ that is obtained when we are done with $f_1$.
By the inductive hypothesis, 
\bes
\sum_{i=2}^k \|a_i f_i\|^2 \leq C' (\|p\|^2 + \|r\|^2) \leq C' (C\|h\| + \|r\|^2).
\ees
Putting this together with $\|a_1 f_1\| \leq C \|h\|$, and changing $C$, we are done.
\end{proof}

\begin{remark}
\emph{It would be desirable to replace (\ref{eq:stabdivr}) with the stronger $\sum_i \|a_i f_i\|^2 + \|r\|^2 \leq C' \|h\|^2$, but that is impossible. For example, when $k=1$ and $f_1 = x^2 + xy$, running the algorithm with the input $h = x^n$ will give  huge remainders $r$ (see Example \ref{ex:x^2+xy}).}
\end{remark}

\begin{theorem}\label{thm:d=2}
Every homogeneous ideal $I \subseteq \cA_2$ has the stable division property. 
\end{theorem}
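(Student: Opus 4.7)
The strategy is to reduce to Lemma~\ref{lem:stab2} by exhibiting a Groebner basis for a graded truncation of $I$ whose elements are homogeneous of a single common degree.

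\emph{Step 1 (padding and Gaussian elimination).} Fix a homogeneous Groebner basis $g_1,\ldots,g_s$ of $I$ with $d_i=\deg g_i$, and set $m=\max_i d_i$. Consider the finite family
$$S=\{x^{m-d_i-j}y^j g_i:1\le i\le s,\ 0\le j\le m-d_i\},$$
which consists of homogeneous polynomials of degree $m$. Since $m\ge d_i$ for every $i$, an elementary monomial splitting argument shows that $S$ spans $I_m$ as a $\mb{C}$-vector space and that the truncated ideal $I_{\ge m}:=I_m+I_{m+1}+\cdots$ is in fact generated as an ideal by $I_m$. Perform Gaussian elimination in $H_m$ to extract from $S$ a basis $f_1,\ldots,f_k$ of $I_m$ with pairwise distinct leading monomials, and order them so that $LT(f_1)>LT(f_2)>\cdots>LT(f_k)$.

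\emph{Step 2 (Groebner basis verification).} The key claim is that $\{f_1,\ldots,f_k\}$ is a Groebner basis of $I_{\ge m}$. Observe first that $\{LT(f_j)\}$ is exactly the set of monomials in $LT(I)\cap H_m$, since it is the monomial basis of the initial subspace of $I_m\subset H_m$. Next, for any monomial $x^Ay^B\in LT(I)$ with $A+B\ge m$, divisibility by some $LT(g_i)=x^{a_i}y^{b_i}$ combined with the degree condition $A+B\ge m$ allows one to choose $j\in[0,m-d_i]$ so that $LT(x^{m-d_i-j}y^jg_i)=x^{m-d_i-j+a_i}y^{j+b_i}$ still divides $x^Ay^B$. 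This divisor lies in $LT(I)\cap H_m=\{LT(f_j)\}$, proving that $\{LT(f_j)\}$ generates the monomial ideal $LT(I_{\ge m})$.

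\emph{Step 3 (stability plus low-degree completion).} For any homogeneous $h\in I_n$ with $n\ge m$, Lemma~\ref{lem:stab2} applied to $f_1,\ldots,f_k$ yields $h=\sum a_i f_i+r$ with $\sum\|a_if_i\|^2\le C(\|h\|^2+\|r\|^2)$. Since $\{f_j\}$ is a Groebner basis of $I_{\ge m}$ and $h\in I_{\ge m}$, the reduced remainder $r$ must vanish, giving $\sum\|a_if_i\|^2\le C\|h\|^2$. For homogeneous $h\in I_n$ with $n<m$, augment the generating set by a finite orthonormal basis $\{e_1,\ldots,e_l\}$ of the finite-dimensional space $I_0+\cdots+I_{m-1}$; then $h=\sum c_j e_j$ with $\sum\|c_je_j\|^2=\|h\|^2$. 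The combined set $\{f_1,\ldots,f_k\}\cup\{e_1,\ldots,e_l\}$ generates $I$ as a module, and because $I$ is graded the earlier remark reduces stability to the homogeneous case already handled.

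The main obstacle I anticipate is Step 2: the combinatorial verification that the padded-and-reduced generators retain the Groebner basis property, i.e., that their leading terms cover all of $LT(I_{\ge m})$. Once that is established, the theorem follows immediately by combining Lemma~\ref{lem:stab2} with the trivial stability of a finite-dimensional orthonormal expansion.
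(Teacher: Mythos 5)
Your proposal is correct and follows essentially the same route as the paper: reduce to a generating set of homogeneous polynomials of one common degree $m$ forming a Groebner basis, apply Lemma~\ref{lem:stab2}, and use the Groebner property to conclude $r=0$ for $h\in I$. The only difference is that you spell out the padding/Gaussian-elimination reduction and the verification that the degree-$m$ generators remain a Groebner basis of $I_{\ge m}$, which the paper simply asserts via ``we may assume'' (citing the analogous reduction in Proposition~\ref{prop:monomials}).
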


\begin{proof}
As in the proof of Proposition \ref{prop:monomials}, we may assume that that $I$ is generated by a set $F = \{f_1, \ldots, f_k\}$ of homogeneous polynomials of the same degree $m$. Furthermore, we may assume that $F$ is a Groebner basis with respect to lexicographic order on monomials.

By Lemma \ref{lem:stab2}, there is a $C$ such that every $h \in \cA_2$ can be written as 
\bes
h = a_1 f_1 + \ldots + a_k f_k + r,
\ees
with $\sum_i \|a_i f_i\|^2 \leq C(\|h\|^2 + \|r\|^2)$. Now let $h \in I_n$. We may assume that $n > 4m$. Under this assumption, we saw that the $a_i$'s and $r$ can be found by the division algorithm. But by the Corollary on p. 81, \cite{CLO92}, since $F$ is a Groebner basis, we actually get $r = 0$. Thus
\bes
\sum_i \|a_i f_i\|^2 \leq C\|h\|^2
\ees
for all such $h$, and the proof is complete.
\end{proof}

The following example shows that Lemma \ref{lem:stab2} cannot be extended to $d>2$.
\begin{example}\label{expl:non_stable_basis}
\emph{Taking $f_1 = x^2+wy, f_2 = y^2$,  and $h = x^4 w^n$, we find that the above algorithm gives}
\bes
h = (x^2 w^n - w^{n+1}y) f_1 + w^{n+2}f_2.
\ees
\emph{But $\|h\|^2 \sim n^{-4}$, while $\|w^{n+2}f_2\|^2 = \|w^{n+2}y^2\|^2 \sim n^{-2}$. In fact, in any presentation of $h$ as a combination $h = a_1 f_1 + a_2 f_2$, the monomial $w^{n+2}y^2$ must appear in both terms $a_1 f_1$ and $a_2 f_2$. That means that we cannot write $h = a_1 f_1 + a_2 f_2$ with $\|a_1 f_1 \|^2 + \|a_2 f_2\|^2 \leq C \|h\|^2$, where $C$ is independent of $h$. So the the set of generators $\{x^2 + wy, y^2\}$ is not a stable generating set for the ideal $I = \lel x^2 + wy, y^2 \rir$ that it generates. It is worth noting that $\{x^2 + wy, y^2\}$ is a Groebner basis for I. On the other hand, the ideal $I$ \emph{does} have the stable division property. This can be verified by using a Groebner basis with respect to the lexicographic order with $w>x>y$. This Groebner basis is given by $\{y^2, yx^2, x^4, wy + x^2\}$. }
\end{example}

\subsection{Zero dimensional ideals}
Recall that an ideal $I \subseteq \cA_d$ is said to be \emph{zero dimensional} if the affine variety associated to $I$,
\bes
V(I):=\{z \in \mb{C}^d : \forall f \in I . f(z) = 0\},
\ees
is finite. Note that for a zero dimensional \emph{homogeneous} ideal $I$ it is always true that $V(I) = \{0\}$.

\begin{theorem}
Let $I$ be any zero dimensional ideal in $\cA_d$. Then $I$ has the stable division property.
\end{theorem}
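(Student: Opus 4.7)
My plan is to reduce the theorem, at least in the homogeneous case, to the monomial module case proved in Proposition \ref{prop:monomials} together with a finite-dimensional linear algebra correction at low degrees. The key input is that for a \emph{homogeneous} zero-dimensional ideal one has $V(I) = \{0\}$, so by Hilbert's Nullstellensatz $\sqrt{I} = (z_1, \ldots, z_d)$; in particular there exist integers $n_1, \ldots, n_d$ with $z_i^{n_i} \in I$, and by pigeonhole every monomial of total degree at least $N := \sum_i(n_i - 1) + 1$ lies in $I$. Consequently $I_n = H_n$ for all $n \geq N$.

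Starting from any homogeneous generating set $\{f_1, \ldots, f_k\}$ of $I$, I would augment it to $F = \{f_1, \ldots, f_k\} \cup \{z^\beta : |\beta| = N\}$; the added generators lie in $I$ and the enlarged set still generates. For homogeneous $h \in I_n$ with $n \geq N$ one has $h \in H_n = I_n$, and Proposition \ref{prop:monomials} applied to the monomial module generated by $\{z^\beta : |\beta| = N\}$ (which contains $H_n$ since every degree-$n$ monomial factors through some $z^\beta$ with $|\beta| = N$) yields $h = \sum_{|\beta| = N} a_\beta z^\beta$ with $\sum_\beta \|a_\beta z^\beta\|^2 \leq \|h\|^2$. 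For homogeneous $h \in I_n$ with $n < N$, the subspace $I_n \subseteq H_n$ is finite-dimensional, so a bounded linear section of the sum map $(a_1, \ldots, a_k) \mapsto \sum_i a_i f_i$ onto $I_n$ exists by elementary finite-dimensional linear algebra; taking the maximum of the resulting operator norms over the finitely many $n < N$ gives a uniform constant. Invoking the remark that it suffices to check the stable division property on homogeneous inputs, the argument is complete in the homogeneous case.

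The main obstacle for the general (possibly non-homogeneous) zero-dimensional case is that $I$ need no longer contain any power of the maximal ideal at the origin. One still obtains monic polynomials $q_i(z_i) \in I$ in each variable (since each $z_i$ satisfies a polynomial equation modulo $I$ by finite codimension), but division by such a $q_i$ in the $H^2$ norm can amplify norms exponentially when the roots of $q_i$ lie away from the origin, so the monomial argument cannot simply be replayed. I would expect the resolution to require a careful monomial-by-monomial strategy for reducing $h \in I$ that prefers the $q_i$ with a root at the origin, followed by a bounded linear section on the finite-dimensional quotient $I / (q_1, \ldots, q_d)$ as in the homogeneous case.
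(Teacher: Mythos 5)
Your argument for the homogeneous case is correct, and it is a reasonable reconstruction of the argument the paper leaves implicit. The paper's entire proof is the single sentence: by the Finiteness Theorem (the cited theorem in \cite{CLO92}), $I$ is finite codimensional, ``and from here it is not hard to prove that it has the stable division property.'' Your chain of reasoning --- $V(I)=\{0\}$, so $z_i^{n_i}\in I$ for some $n_i$, so every monomial of degree $\geq N$ lies in $I$ by pigeonhole, so $I_n = H_n$ for $n\geq N$; augment the generators with $\{z^\beta:|\beta|=N\}$; invoke Proposition \ref{prop:monomials} (with constant $1$) in degrees $\geq N$; and take a bounded linear section of the finite-dimensional surjection $(a_1,\ldots,a_k)\mapsto \sum a_i f_i$ for the finitely many degrees $<N$ --- is precisely what the paper's ``not hard'' is hiding, and each step is valid. (The augmentation of the generating set is a necessary precaution: Example \ref{expl:non_stable_basis} shows the original generators need not be a stable generating set.) The paper arrives at the finiteness input via the Finiteness Theorem rather than via the Nullstellensatz and pigeonhole, but that is a cosmetic difference.

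Your concern about the non-homogeneous case is well founded and deserves emphasis. The theorem is stated for any zero-dimensional ideal, and the cited Finiteness Theorem does apply to non-homogeneous ideals, but the subsequent step --- passing from finite codimension to stable division --- genuinely needs $I\supseteq\mathfrak{m}^N$, which fails when $V(I)\neq\{0\}$. The paper gives no indication of how to handle this, and the remark immediately preceding the theorem (``for a zero dimensional homogeneous ideal $I$ it is always true that $V(I)=\{0\}$'') suggests that the homogeneous case is what the author primarily has in mind, consistent with the rest of Section \ref{sec:H^2} and with the applications in Section \ref{sec:stab_div_ess_nor}, which concern only graded modules. So you have supplied a complete proof of the case the paper actually uses and exploits, and you have correctly flagged that the general claim, as literally stated, is not proved by the argument the paper sketches; your proposed remedy (localizing at each point of $V(I)$ and combining with a finite-dimensional section) is plausible but would require substantial additional work to carry out.
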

\begin{proof}
By the theorem on page 232 in \cite{CLO92}, $I$ is a finite co-dimensional subspace of $\cA_d$, and from here it is not hard to prove that it has the stable division property.
\end{proof}

\section{Stable division with respect to the $\ell_1$ norm}\label{sec:ell1}

In this section $\|\cdot\|$ denotes the $\ell_1$ norm given by (\ref{eq:l1}). This norm is perhaps the most natural way to measure the ``size" of a polynomial, and it also has the feature that it behaves nicely with respect to the division algorithm (roughly speaking, the division algorithm moves coefficients from one coordinate to another, therefore an $\ell_1$ norm is more appropriate than an $\ell_2$ norm). All the classes of modules that were shown in the previous section to have the stable division property with respect to the $H^2$ norm can also be seen (using the same proofs) to have the stable division property with respect to the $\ell_1$ norm. However, for the $\ell_1$ norm we can prove much more. We shall show in this section that every ideal is \emph{linearly equivalent} to an ideal that has the stable division property (see Definition \ref{def:lineq} below).

In this section, unlike the rest of the paper, it will be convenient to use the lexicographic order with $z_d > \ldots > z_1$.

A straightforward calculation gives the following lemma.
\begin{lemma}\label{lem:M_f}
Let $f \in \cA_d$, and let $M_f: \cA_d \rightarrow \cA_d$ be the operator given by
\bes
M_f g = fg.
\ees 
Then $\|M_f\| = \|f\|$.
\end{lemma}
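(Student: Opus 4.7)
The plan is to prove the two inequalities $\|M_f\| \le \|f\|$ and $\|M_f\| \ge \|f\|$ separately, both of which are short.

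For the upper bound, I would write $f = \sum_\alpha c_\alpha z^\alpha$ and $g = \sum_\beta d_\beta z^\beta$, so that the product is
\bes
fg = \sum_\gamma \Bigl( \sum_{\alpha + \beta = \gamma} c_\alpha d_\beta \Bigr) z^\gamma.
\ees
Applying the definition of the $\ell_1$ norm and the triangle inequality,
\bes
\|fg\| = \sum_\gamma \Bigl| \sum_{\alpha + \beta = \gamma} c_\alpha d_\beta \Bigr| \le \sum_\gamma \sum_{\alpha + \beta = \gamma} |c_\alpha| |d_\beta| = \Bigl( \sum_\alpha |c_\alpha| \Bigr) \Bigl( \sum_\beta |d_\beta| \Bigr) = \|f\|\,\|g\|.
\ees
This shows $\|M_f g\| \le \|f\|\,\|g\|$ for every $g \in \cA_d$, hence $\|M_f\| \le \|f\|$. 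This is just the standard observation that $\ell_1$ on a commutative monoid (here $\mb{N}^d$) is a Banach algebra under convolution.

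For the lower bound, I would simply test $M_f$ on the constant polynomial $g = 1$, which satisfies $\|1\| = 1$. Then $M_f(1) = f$, so
\bes
\|M_f\| \ge \frac{\|M_f(1)\|}{\|1\|} = \|f\|.
\ees
Combining the two inequalities gives the claimed equality. There is no real obstacle here; the only subtlety is to be clear that $\ell_1$ (rather than $\ell_2$) is exactly the norm for which convolution has operator norm equal to the $\ell_1$ norm of the kernel, which is why this lemma is stated specifically in the $\ell_1$ section.
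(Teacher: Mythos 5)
Your proof is correct and is exactly the "straightforward calculation" the paper alludes to without spelling out: the submultiplicativity of the $\ell_1$ norm gives $\|M_f\| \le \|f\|$, and evaluating at $g = 1$ gives equality. Nothing to add.
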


\begin{proposition}\label{prop:stable_div_cond}
Let $f_1, \ldots, f_k \in \cA_d$ be such that for all $j=1, \ldots,k$, if $f_j(z) = \sum_\alpha c_\alpha z^\alpha$ with $LT(f_j) = c_\beta z^\beta$, then
\be\label{eq:cineq}
|c_\beta| >  \sum_{\alpha \neq \beta} |c_\alpha|.
\ee
Then there is a constant $C$ such that for every $h \in \cA_d$, the division algorithm gives a decomposition
\be\label{eq:decomposition}
h = \sum_{i=1}^k a_i f_i + r,
\ee
with $\sum_i \|a_i f_i \| \leq C \|h\|$ and $\|r\| \leq h$.
\end{proposition}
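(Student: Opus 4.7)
The plan is to follow a single run of the division algorithm (Algorithm I of Appendix \ref{subsec:div_alg}) on $h$, using the hypothesis \eqref{eq:cineq} as a strict diagonal-dominance condition that turns each reduction into an $\ell^1$ contraction. The key invariant will be the total mass $\|p\| + \|r\|$, where $p$ is the current working polynomial and $r$ the accumulated remainder.

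First I would normalize each divisor. Write $f_j = c_{\beta_j} z^{\beta_j} + \tilde{f}_j$, where $LT(f_j) = c_{\beta_j} z^{\beta_j}$, and set $\rho_j := \|\tilde{f}_j\|/|c_{\beta_j}|$. The hypothesis \eqref{eq:cineq} is exactly the statement that $\rho := \max_j \rho_j < 1$, and this single parameter will control every estimate.

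Next I would analyze one iteration. At a reduction step with $LT(p) = b z^\gamma$ divisible by $z^{\beta_j}$, the algorithm chooses the monomial $m = \frac{b}{c_{\beta_j}} z^{\gamma - \beta_j}$, appends it to $a_j$, and updates $p$ to $p - m f_j = p - b z^\gamma - m \tilde{f}_j$. Since the $z^\gamma$ term in $p$ has coefficient $b$, cancellation gives $\|p - b z^\gamma\| = \|p\| - |b|$, while $\|m \tilde f_j\| = \tfrac{|b|}{|c_{\beta_j}|}\|\tilde f_j\| \leq |b|\rho$. Hence the triangle inequality (and the identity $\|f_j\| = |c_{\beta_j}|(1+\rho_j)$) yield
\begin{equation*}
\|p_{\mathrm{new}}\| \leq \|p\| - |b|(1-\rho), \qquad \|m f_j\| = \tfrac{|b|}{|c_{\beta_j}|}\|f_j\| \leq |b|(1+\rho).
\end{equation*}
At a remainder step (when $LT(p)$ is not divisible by any $LT(f_j)$ and is transferred to $r$), the total mass $\|p\| + \|r\|$ is preserved exactly. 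Consequently $\|p\|+\|r\|$ is nonincreasing throughout the algorithm and, at each reduction step, it drops by at least $|b_i|(1-\rho)$, where $b_i$ is the leading coefficient of $p$ at that iteration.

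Starting from $\|p\|+\|r\| = \|h\|$ and summing, at termination we get $\|r\| \leq \|h\|$ and $(1-\rho)\sum_i |b_i| \leq \|h\|$. Since each $a_j$ is a sum of the monomials $m_i$ contributed at iterations reducing by $f_j$, the triangle inequality gives
\begin{equation*}
\sum_j \|a_j f_j\| \;\leq\; \sum_i \|m_i f_{j(i)}\| \;\leq\; (1+\rho)\sum_i |b_i| \;\leq\; \frac{1+\rho}{1-\rho}\,\|h\|,
\end{equation*}
so one may take $C = (1+\rho)/(1-\rho)$. The main conceptual point is recognizing that the strict inequality \eqref{eq:cineq} produces a Neumann-series-type contraction factor $\rho < 1$, and this is precisely what prevents the geometric coefficient blow-up exhibited in Example \ref{ex:x^2+xy}; after that, everything reduces to triangle-inequality bookkeeping with $\|p\|+\|r\|$ as a potential function.
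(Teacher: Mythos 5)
Your proof is correct, and it takes a genuinely different route from the paper's. The paper runs Algorithm~II, normalizes the leading coefficients to $1$, introduces the \emph{height} $Ht(cz^\gamma) = |\{\beta : \beta \leq \gamma\}|$, and proves by induction on $Ht(h)$ that $\sum_i \|a_i\| \leq \sum_{n=0}^{Ht(h)} \rho^n \|h\| \leq (1-\rho)^{-1}\|h\|$; it then passes to $\sum_i \|a_i f_i\|$ via Lemma~\ref{lem:M_f} ($\|M_f\| = \|f\|$). You instead run Algorithm~I and track the potential $\|p\|+\|r\|$, observing that each reduction step decreases it by at least $|b_i|(1-\rho)$ while remainder steps leave it fixed; summing the drops gives $(1-\rho)\sum_i |b_i| \leq \|h\|$ in one pass, and each contribution $m_i f_{j(i)}$ has $\ell^1$-norm at most $(1+\rho)|b_i|$, so $\sum_j \|a_j f_j\| \leq \frac{1+\rho}{1-\rho}\|h\|$. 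What this buys is a more transparent, single-pass argument that bypasses both the induction and Lemma~\ref{lem:M_f}, and it gives the explicit constant $C=(1+\rho)/(1-\rho)$ directly. Both arguments exploit exactly the same contraction factor $\rho<1$ coming from the diagonal-dominance hypothesis~\eqref{eq:cineq}, so the core idea is shared; the difference is purely in the bookkeeping, where your potential-function formulation is arguably cleaner. (One small remark: your analysis is written for Algorithm~I, while the paper uses Algorithm~II so as to feed into its later discussion of minimal-term choices; your potential argument actually goes through verbatim for Algorithm~II as well, since $\|p - b z^\gamma\| = \|p\| - |b|$ holds whenever $b z^\gamma$ is a term actually present in $p$, not only when it is the leading term.)
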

\begin{proof}
It convenient to assume that the leading coefficients of the $f_j$'s are all $1$, and we may do so. Thus there is some $\rho\in(0,1)$ such that for all $j$, if $f_j(z) = \sum_\alpha c_\alpha z^\alpha$ with $LT(f_j) = c_\beta z^\beta$, then $\sum_{\alpha \neq \beta} |c_\alpha| < \rho$.

Let $h(z) = \sum b_\alpha z^\alpha$. We run {\bf Algorithm II} from Appendix \ref{subsec:div_alg} (please recall the notation). Using condition (\ref{eq:cineq}), it is easy to see that every modification of $p$ in Step (\ref{it:reduceII}), $\|p\|$ only gets smaller. Since at the beginning of the algorithm we set $p:=h$, and at the end of the algorithm we set $r:=p$, we get $\|r\| \leq \|h\|$.

Now we must also bound the quantity $\sum \|a_i f_i\|$. By Lemma \ref{lem:M_f}, it is enough to to bound $\sum_i \|a_i\|$ by a multiple of $\|h\|$. The rest of the proof is devoted to obtaining the bound
\be\label{eq:bound}
\sum_{i=1}^k \|a_i\| \leq (1-\rho)^{-1} \|h\| .
\ee
We introduce some notation to streamline the slightly technical argument. For a monomial term $h = cz^\gamma$ ($c\neq 0$), we define the \emph{hight} of $h$, denoted $Ht(h)$, as
\bes
Ht(h) := |\{\beta : \beta \leq \gamma \}|,
\ees
where $|\cdot|$ denotes cardinality. For a general polynomial $h$ we define $Ht(h) = Ht(LT(h))$.

To algorithmically obtain (\ref{eq:decomposition}) with estimate (\ref{eq:bound}), we need to specify the choice of term made in Step (\ref{it:ChooseTerm}) in {\bf Algorithm II}. The specifications needed will be made clear by the proof below. The reader may later want to check that the procedure implied by the proof below is equivalent to choosing at each iteration of Step (\ref{it:ChooseTerm}) the term $t$ of $p$ that is the \emph{minimal} possible term reducible by any $f_j$. 
We will prove (\ref{eq:bound}) by induction on the height of $h$.

\noindent{\bf Claim:} \emph{Division of a polynomial $h$ by $(f_1, \ldots, f_k)$ gives the decomposition (\ref{eq:decomposition}) such that}
\be\label{eq:Htbound}
\sum_{i=1}^k\|a_i\| \leq \sum_{n=0}^{Ht(h)} \rho^n \|h\|.
\ee

\noindent{\bf Proof of claim.} If $Ht(h) = 1$ then $h$ is a nonzero constant. Either it plays the role of the remainder in (\ref{eq:decomposition}), or one of the $f_i$'s is a constant, say $f_1 = c$, and  then $h = h/c f_1$. In this case (\ref{eq:Htbound}) trivially holds.

Assume now that $Ht(h) > 1$. Write $h = cz^\gamma + g$, where $cz^\gamma  = LT(h)$ and $g = h - LT(h)$. Note that $\|h\| = \|cz^\gamma\| + \|g\|$. Algorithmically, we will first divide $g$ and only then shall we turn to dividing $cz^\gamma$. This is equivalent to dividing $g$ and $cz^\gamma$ separately and then adding the output. Since $Ht(g) < Ht(h)$, the inductive hypothesis gives
\bes
g = \sum_{i=1}^k a_i^1 f_i + r^1
\ees
with $\sum\|a_i^1\| \leq  \sum_{n=0}^{Ht(h)-1} \rho^n \|g\|$. Now we consider the term $cz^\gamma$. If it is not divisible by any of the leading terms of $f_1, \ldots, f_k$ then we have equation (\ref{eq:decomposition}) with $a_i = a_i^1$ and $r = r^1 + cz^\gamma$. In this case the required bound holds. 

If $cz^\gamma$ is divisible by one of the leading terms of $f_1, \ldots, f_k$, say by $LT(f_{i_0})$, then we reduce the term $t = cz^\gamma$ by $f_{i_0}$ as described in Step \ref{it:reduceII} of {\bf Algorithm II}: $A_{i_0} := cz^\gamma /LT(f_{i_0})$ and $p := cz^\gamma - (cz^\gamma /LT(f_{i_0}))f_{i_0}$. This step produces a polynomial $p$ which we need to continue to divide. Note that $\|p\| \leq \rho \|cz^\gamma\|$ and $Ht(p) < Ht(cz^\gamma)$. By the inductive hypothesis, division of $p$ gives
\bes
p = \sum_{i=1}^k a_i^2 f_i + r^2 ,
\ees
with $\sum\|a^2_i\| \leq \sum_{n=0}^{Ht(h)-1} \rho^n  \|p\|$. Thus we have equation (\ref{eq:decomposition}) with $a_i = a_i^1 + a_i^2$ for $i \neq i_0$, $a_{i_0} = a^1_{i_0} + a^2_{i_0} + A_{i_0}$, and $r = r^1 + r^2$.  Thus 
\begin{align*}
\sum \|a_i\| &\leq \sum \|a_i^1\| + \sum \|a_i^2\| + \|A_{i_0}\| \\
&\leq \sum_{n=0}^{Ht(h)-1} \rho^n \|g\| + \sum_{n=0}^{Ht(h)-1} \rho^n  \|p\| + \|cz^\gamma\| \\
&\leq \sum_{n=0}^{Ht(h)-1} \rho^n \|g\| + \sum_{n=1}^{Ht(h)} \rho^n  \|cz^\gamma\| + \|cz^\gamma\| \\
&\leq \sum_{n=0}^{Ht(h)} \rho^n  (\|g\|+\|cz^\gamma\|) = \sum_{n=0}^{Ht(h)} \rho^n  \|h\|.
\end{align*}
That proves the claim, which clearly implies (\ref{eq:bound}). As we noted earlier, this bound together with Lemma \ref{lem:M_f} completes the proof.
\end{proof}

\begin{definition}\label{def:lineq}
We say that two ideals $I,J \subseteq \cA_d$ are \emph{linearly equivalent} if there is a linear change of variables that sends $I$ onto $J$.
\end{definition}

\begin{lemma}\label{lem:existlambda}
Let $f_1, \ldots, f_k \in \cA_d$. There exist $\lambda_1, \ldots, \lambda_d > 0$ such that the polynomials $g_1, \ldots, g_k$ given by 
\bes
g_j(z_1, \ldots, z_d) = f_j(\lambda_1 z_1, \ldots, \lambda_d z_d)
\ees
satisfy the following: for all $j=1, \ldots,k$, if $g_j(z) = \sum_\alpha c_\alpha z^\alpha$ with $LT(g_j) = c_\beta z^\beta$, then
\bes
|c_\beta| > \sum_{\alpha \neq \beta} |c_\alpha|.
\ees
\end{lemma}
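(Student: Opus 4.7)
The plan is to look for $\lambda_i$ of the form $\lambda_i = L^{N_i}$ for a single large parameter $L>0$ and fixed positive integer weights $N_1 < N_2 < \cdots < N_d$. Under $z_i \mapsto \lambda_i z_i$ the coefficients transform as $c_\alpha \mapsto c_\alpha \lambda^\alpha$; since every $\lambda_i$ is positive, the support of $g_j$ equals that of $f_j$, so the leading multi-index $\beta^{(j)}$ (which is determined purely by the lex order, not by magnitude) is unchanged. The desired inequality for $g_j$ then becomes
\bes
|c^{(j)}_{\beta^{(j)}}| > \sum_{\alpha \neq \beta^{(j)}} |c^{(j)}_\alpha| \, L^{N\cdot(\alpha - \beta^{(j)})},
\ees
where $N\cdot\gamma := \sum_m N_m \gamma_m$.

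The crux is to choose the $N_i$ so that the linear functional $\alpha \mapsto N\cdot\alpha$ strictly respects the lex order with $z_d > \cdots > z_1$ on the finite set of multi-indices occurring in $f_1,\ldots,f_k$. Let $D$ be the maximum total degree appearing among the $f_j$, and take $N_i := (D+1)^{i-1}$. If $\alpha <_{\text{lex}} \beta$ and $i$ is the largest coordinate where they differ (so $\alpha_m = \beta_m$ for $m>i$ and $\alpha_i < \beta_i$), then using $|\alpha_m - \beta_m| \leq D$ together with the geometric identity $D\sum_{m<i} N_m = N_i - 1$, one obtains
\bes
N\cdot(\beta - \alpha) \geq N_i - D\sum_{m<i} N_m \geq 1.
\ees
In particular every exponent $N\cdot(\alpha - \beta^{(j)})$ with $\alpha \neq \beta^{(j)}$ occurring in some $f_j$ is a negative integer.

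With the $N_i$ fixed in this way, each term on the right-hand side of the target inequality is bounded by $|c^{(j)}_\alpha| L^{-1}$, so the full sum is at most $L^{-1}\|f_j\|$. Choosing $L$ large enough that $L^{-1}\|f_j\| < |c^{(j)}_{\beta^{(j)}}|$ for every $j$ simultaneously — a single open condition — completes the argument. There is no real obstacle lurking; the one substantive ingredient is the existence of an integer weight vector linearizing the lex order on a prescribed finite collection of multi-indices, which the explicit choice $N_i = (D+1)^{i-1}$ provides.
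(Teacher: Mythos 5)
Your proof is correct and follows essentially the same route as the paper's: both rescale by rapidly growing positive dilation factors (the paper via the recursion $\lambda_{j+1} = (\lambda_1\cdots\lambda_j)^{N+1}$ starting from $\lambda_1 = M(K+1)$, you via $\lambda_i = L^{(D+1)^{i-1}}$ with $L$ large) so that the lex-leading coefficient dominates the $\ell^1$ mass of the remaining ones. Your packaging of the key step as an integer weight vector strictly linearizing the pure lexicographic order with $z_d > \cdots > z_1$ on bounded-degree multi-indices is a clean and slightly more transparent way to obtain the same estimate; note that, exactly like the paper's choice, it depends on the order of Section 3 being pure lex rather than graded lex.
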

\begin{proof}
We may assume that not all the $f_j$'s are monomials. Put $N = \max_j \deg f_j$, and let $M$ be the dimension of the space of polynomials with degree less than or equal to $N$. Define $K = \max\{|c| : c \textrm{ is a coefficient of some } f_j\}$. Define $\lambda_1 = M(K+1)$, and now define $\lambda_2, \ldots, \lambda_d$ recursively by
\bes
\lambda_{j+1} = (\lambda_1 \cdots \lambda_j)^{N+1} \,\, , \,\, j = 1, \ldots, d-1.
\ees
Now let $j\in \{1, \ldots,k\}$, and consider $g_j(z) = \sum_\alpha c_\alpha z^\alpha$. The choice of the $\lambda_i$'s implies that whenever $c_\alpha z^\alpha < c_\beta z^\beta$, then $M (K+1) |c_\alpha| < |c_\beta|$. The result follows.
\end{proof}

\begin{lemma}\label{lem:GrobnerBasis}
Let $I$ be an ideal, let $\{f_1, \ldots, f_k\}$ be a Groebner basis for $I$, and fix $\lambda_1, \ldots, \lambda_d \in \mb{C} \setminus \{0\}$. Define
\be\label{eq:J}
J = \{f(\lambda_1 z_1, \ldots, \lambda_d z_d) : f \in I\},
\ee
and 
\be\label{eq:g_j}
g_j(z_1, \ldots, z_d) = f(\lambda_1 z_1, \ldots, \lambda_d z_d) \,\, , \,\, j = 1, \ldots, k.
\ee
Then $J$ is an ideal that is equivalent to $I$ for which $\{g_1, \ldots, g_k\}$ is a Groebner basis.
\end{lemma}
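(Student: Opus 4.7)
The plan is to work with the ring automorphism $\Phi: \cA_d \to \cA_d$ defined by $\Phi(f)(z_1, \ldots, z_d) = f(\lambda_1 z_1, \ldots, \lambda_d z_d)$. Since each $\lambda_i$ is nonzero, $\Phi$ is invertible with inverse $\Psi(g)(z) = g(\lambda_1^{-1}z_1, \ldots, \lambda_d^{-1}z_d)$, and it is a unital algebra homomorphism. In particular $J = \Phi(I)$ is an ideal, and linear equivalence of $I$ and $J$ is witnessed by the invertible linear change of variables $z_i \mapsto \lambda_i^{-1} z_i$ (the map inducing $\Psi$). So the first claim is immediate.

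For the Groebner basis assertion, the key observation is that $\Phi$ acts diagonally on the monomial basis: $\Phi(z^\alpha) = \lambda^\alpha z^\alpha$, where $\lambda^\alpha := \lambda_1^{\alpha_1}\cdots\lambda_d^{\alpha_d} \neq 0$. Consequently, if $f = \sum_\alpha c_\alpha z^\alpha$ then $\Phi(f) = \sum_\alpha c_\alpha \lambda^\alpha z^\alpha$: $\Phi$ only rescales coefficients, it does not mix monomials. In particular, the \emph{monomial} appearing in $LT(\Phi(f))$ is exactly the monomial appearing in $LT(f)$, since the set of monomials occurring with nonzero coefficient in $\Phi(f)$ is identical to that in $f$, and the monomial order is an ordering of monomials independent of coefficients. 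Thus, after setting $g_j = \Phi(f_j)$, each $LT(g_j)$ is a nonzero scalar multiple of $LT(f_j)$, so the monomial ideals they generate coincide:
\bes
\langle LT(g_1), \ldots, LT(g_k) \rangle = \langle LT(f_1), \ldots, LT(f_k) \rangle.
\ees

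Next I would show $\langle LT(J) \rangle = \langle LT(I) \rangle$, again as monomial ideals. Indeed, if $h \in J$ is nonzero, then $h = \Phi(f)$ for some nonzero $f \in I$, and by the previous paragraph the monomial of $LT(h)$ coincides with that of $LT(f) \in LT(I)$. The same argument applied to $\Psi$ gives the reverse inclusion. Combining with the Groebner basis hypothesis on $\{f_1, \ldots, f_k\}$, we get
\bes
\langle LT(g_1), \ldots, LT(g_k) \rangle = \langle LT(f_1), \ldots, LT(f_k) \rangle = \langle LT(I) \rangle = \langle LT(J) \rangle,
\ees
which is precisely the defining property for $\{g_1, \ldots, g_k\}$ to be a Groebner basis of $J$.

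There is essentially no obstacle here; the whole argument rests on the trivial but crucial fact that the automorphism $\Phi$ is diagonal in the monomial basis, so it preserves leading monomials up to nonzero scalar, and hence preserves both ideals and their leading-term ideals.
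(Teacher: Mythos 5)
Your proof is correct and follows essentially the same route as the paper's: both rest on the observation that the substitution $z_i \mapsto \lambda_i z_i$ rescales monomial coefficients without mixing monomials, so leading terms are preserved up to nonzero scalars, giving $\langle LT(g_j)\rangle = \langle LT(f_j)\rangle = \langle LT(I)\rangle = \langle LT(J)\rangle$. Yours is simply a more carefully spelled-out version, explicitly invoking the automorphism $\Phi$ and its inverse, where the paper compresses the argument to two sentences.
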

\begin{proof}
Note that $LT(J) = LT(I)$, and that for all $j$, up to multiplication by constants, $LT(f_j) = LT(g_j)$. Thus $\lel LT(g_1), \ldots, LT(g_k) \rir = LT(J)$, thus $\{g_1, \ldots, g_k\}$ is a Groebner basis for $J$.
\end{proof}

\begin{theorem}
Every ideal in $\cA_d$ is linearly equivalent to an ideal that has the stable division property with respect to the $\ell_1$ norm.
\end{theorem}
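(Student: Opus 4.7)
The plan is to stitch together Proposition \ref{prop:stable_div_cond}, Lemma \ref{lem:existlambda}, and Lemma \ref{lem:GrobnerBasis}. Given an ideal $I \subseteq \cA_d$, I would first fix a Groebner basis $\{f_1, \ldots, f_k\}$ for $I$ with respect to the lexicographic order in force throughout this section. Lemma \ref{lem:existlambda} then furnishes scalars $\lambda_1, \ldots, \lambda_d > 0$ such that the rescaled polynomials $g_j(z) = f_j(\lambda_1 z_1, \ldots, \lambda_d z_d)$ satisfy the leading-term dominance condition (\ref{eq:cineq}).

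Let $J$ be the ideal defined in (\ref{eq:J}). Lemma \ref{lem:GrobnerBasis} says that $J$ is linearly equivalent to $I$ and that $\{g_1, \ldots, g_k\}$ is a Groebner basis for $J$. So the theorem will be proved as soon as we show that $\{g_1, \ldots, g_k\}$ is in fact a stable generating set for $J$ with respect to the $\ell_1$ norm.

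For this, fix $h \in J$. Since the $g_j$'s satisfy (\ref{eq:cineq}), Proposition \ref{prop:stable_div_cond} applied to $(g_1, \ldots, g_k)$ produces, via Algorithm II, a decomposition $h = \sum_i a_i g_i + r$ with $\sum_i \|a_i g_i\| \leq C \|h\|$, where $r$ is either zero or a linear combination of monomials none of which is divisible by any $LT(g_j)$. Now $\{g_1, \ldots, g_k\}$ being a Groebner basis of $J$ and $h$ lying in $J$ forces $r = 0$, by the standard property of Groebner bases (Corollary on p.~81 of \cite{CLO92}, exactly as invoked in the proof of Theorem \ref{thm:d=2}): otherwise the nonzero remainder $r = h - \sum_i a_i g_i \in J$ would have a leading term not divisible by any $LT(g_j)$, contradicting $\langle LT(g_1), \ldots, LT(g_k) \rangle = LT(J)$. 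Thus $h = \sum_i a_i g_i$ with $\sum_i \|a_i g_i\| \leq C\|h\|$, and $J$ has the stable division property.

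The argument is essentially a bookkeeping exercise once the three preceding results are in place, and I do not expect any genuine obstacle. The only slightly subtle point is confirming that the specific choice of reduction strategy used in Algorithm II (the minimal-reducible-term rule spelled out in the proof of Proposition \ref{prop:stable_div_cond}) still terminates with a remainder of the form required for the Groebner-basis argument to kill $r$; but this property of the remainder depends only on which monomials are reduced, not on the order in which they are processed, so the standard Groebner-basis conclusion applies without modification.
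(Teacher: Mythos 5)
Your argument is essentially identical to the paper's proof: fix a Groebner basis for $I$, rescale via Lemma \ref{lem:existlambda} to obtain the dominance condition (\ref{eq:cineq}), invoke Lemma \ref{lem:GrobnerBasis} to preserve the Groebner-basis property under the linear change of variables, and then apply Proposition \ref{prop:stable_div_cond} together with the zero-remainder property of Groebner bases. The one thing you add is the explicit remark that the Groebner-basis zero-remainder conclusion is insensitive to the particular reduction strategy in Algorithm II, which is a fair and correct clarification but not a genuine departure from the paper.
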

\begin{proof}
Let $I$ be an ideal in $\cA_d$. Let $\{f_1, \ldots, f_k\}$ be a Groebner basis for $I$. Define $J$ as in (\ref{eq:J}), and define $\{g_1, \ldots, g_k\}$ as in (\ref{eq:g_j}). By Lemma \ref{lem:GrobnerBasis}, $\{g_1, \ldots, g_k\}$ is a Groebner basis for $J$, and $J$ is equivalent to $I$, for any choice of nonzero $\lambda_1, \ldots, \lambda_k$. By Lemma \ref{lem:existlambda}, we can find such $\lambda$'s for which $g_1, \ldots, g_k$ satisfy the condition of Proposition \ref{prop:stable_div_cond}. But every $h \in J$ is divisible by $\{g_1, \ldots, g_k\}$ with remainder zero, so Proposition \ref{prop:stable_div_cond} implies that $J$ has the stable division property.
\end{proof}

This theorem shows that the stable division property, at least with respect to the $\ell^1$ norm, has nothing to do with the \emph{geometry} of an ideal (in the sense of algebraic geometry). That is: either all ideals have the stable division property, or there exists an ideal that does not posses this property, but which is equivalent to one that does.

\section{Stable division and essential normality}\label{sec:stab_div_ess_nor}

Let $M$ be a graded submodule of $H^2_d \otimes \mb{C}^r$. It is known that there exists a univariate polynomial $HP_M(t)$ such that $\dim (M_n^\perp) = HP_M(n)$ for $n$ sufficiently large \cite[Proposition 4.7]{CLO98}. We define the dimension of $M$, denoted $\dim(M)$, to be  $\deg (HP_M) + 1$. When $r=1$ and $M$ is an ideal, then $\dim(M)$ is the dimension of the \emph{affine} variety determined by $M$.
Since $\dim(H_n \otimes \mb{C}^r) \sim c n^{d-1}$, we always have that $\dim(M) \leq d$.

\begin{theorem}\label{thm:stabdivessnorm}
Let $M$ be a graded Hilbert submodule of $H^2_d \otimes \mb{C}^r$ that has the stable division property. Then $M$ and $\arv /M$ are $p$-essentially normal for all $p>d$. In fact, $\arv /M$ is $p$-essentially normal for all $p> \dim(M)$.
\end{theorem}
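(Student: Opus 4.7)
The strategy is to reduce $p$-essential normality of both $M$ and $\arv/M$ to a Schatten bound on the ``cross-piece'' $D_i := P_M Z_i P_{M^\perp}$, and then exploit the stable division property to estimate $D_i$ block-by-block in the grading. A direct computation using $Z_i M \subseteq M$ yields the identities
\bes
[A_i, A_j^*] = P_M[Z_i,Z_j^*]P_M - D_i D_j^* \text{ on } M,
\ees
\bes
[B_i, B_j^*] = P_{M^\perp}[Z_i,Z_j^*]P_{M^\perp} + D_j^* D_i \text{ on } M^\perp.
\ees
The compressed Arveson commutators lie in $\mathcal{L}^p$ for $p>d$ (using $\dim M_n \lesssim n^{d-1}$) and for $p>\dim(M)$ (using $\dim M_n^\perp \sim n^{\dim(M)-1}$ from the Hilbert polynomial $HP_M$), combined with Arveson's estimate $\|[Z_i,Z_j^*]|_{H_n}\| \lesssim 1/n$. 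By H\"older, $\|D_i D_j^*\|_{\mathcal{L}^p} \leq \|D_i\|_{\mathcal{L}^{2p}} \|D_j\|_{\mathcal{L}^{2p}}$, so the theorem reduces to proving $D_i \in \mathcal{L}^{2p}$ for $p>\dim(M)$.

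Since $M$ is graded, $D_i = \bigoplus_n D_i^{(n)}$ with $D_i^{(n)}: M_n^\perp \to M_{n+1}$. The key observation is that $(Z_i^*)|_{H_{n+1}} = \tfrac{1}{n+1}\partial_i$, immediate from $\|z^\gamma\|^2 = \gamma!/|\gamma|!$. For $x \in M_{n+1}$, apply stable division $x = \sum_j a_j f_j$ with $\sum_j \|a_j f_j\|^2 \leq C\|x\|^2$, and by the Leibniz rule,
\bes
Z_i^* x = \tfrac{1}{n+1} \sum_j \bigl((\partial_i a_j) f_j + a_j \partial_i f_j\bigr).
\ees
The first summand lies in $M$ (being a sum of polynomial multiples of the generators $f_j$), so $P_{M^\perp}$ kills it and $\|D_i^{(n)*} x\| \leq \tfrac{1}{n+1} \bigl\|\sum_j a_j \partial_i f_j\bigr\|$. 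A direct monomial computation in $H^2_d$ yields the estimate $\|a \cdot \partial_i f_j\| \leq C_{f_j}\sqrt{n+1}\,\|a f_j\|$ for $a \in H_{n+1-m_j}$, which combined with stable division gives $\|D_i^{(n)}\| \lesssim 1/\sqrt{n}$. Finally, with $\mathrm{rank}(D_i^{(n)}) \leq \dim M_n^\perp \sim n^{\dim(M)-1}$,
\bes
\|D_i\|_{\mathcal{L}^{2p}}^{2p} = \sum_n \|D_i^{(n)}\|_{\mathcal{L}^{2p}}^{2p} \lesssim \sum_n n^{\dim(M)-1-p},
\ees
which converges for $p > \dim(M)$, as required.

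The main technical obstacle will be establishing the norm-ratio bound $\|a \cdot \partial_i f_j\| \lesssim \sqrt{n}\,\|a f_j\|$ for polynomial $a$. For monomial $a$ this follows at once from the explicit weights $\|z^\gamma\|^2 = \gamma!/|\gamma|!$; for polynomial $a$, however, one cannot argue via the multiplier norm of $\partial_i f_j$ since neither that nor any lower bound on multiplication by $f_j$ (which fails on $H^2_d$) gives adequate control over the ratio. The extension will require careful analysis of the cross terms in the monomial expansion, presumably relying on the explicit combinatorial structure of the Drury--Arveson inner product.
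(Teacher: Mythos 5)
Your proposal is essentially the paper's proof. The cross-piece $D_i = P_M Z_i P_{M^\perp}$ is exactly the operator $(I-P)Z_iP$ appearing in the paper's computation $[B_i,B_j^*] - P[Z_i,Z_j^*]P = PZ_j^*(I-P)Z_iP$, your block decomposition $D_i = \bigoplus_n D_i^{(n)}$ is the paper's $\sum_n P_n Z_j^*(E_{n+1}-P_{n+1})Z_iP_n$, and the use of $Z_i^*|_{H_{n+1}} = \tfrac{1}{n+1}\partial_i$, the Leibniz rule, stable division, and the rank/norm Schatten estimate are all the same. The only structural differences are cosmetic: you derive both commutator identities explicitly and handle $M$ and $\arv/M$ in parallel via H\"older on $D_iD_j^*$ and $D_j^*D_i$, whereas the paper invokes \cite[Proposition 4.2]{Arv07} to reduce at once to the quotient. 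That is a small win for self-containedness but not a different route.

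The ``main technical obstacle'' you flag at the end, namely the bound $\|a\,\partial_i f_j\| \leq C\sqrt{n+1}\,\|a f_j\|$ for arbitrary polynomial $a$ (not just monomials), is exactly Proposition 2.3 of Guo and Wang \cite{GuoWang}, which the paper cites rather than reproves. So you have correctly isolated the one non-elementary ingredient; once that estimate is taken as known, your argument closes. Your worry that multiplier-norm or lower-bound-on-$M_{f_j}$ arguments would fail is well placed, and the Guo--Wang proof does indeed proceed by a direct analysis of the $H^2_d$ weights rather than via any multiplier bound. In short: no gap relative to the paper, provided you cite Guo--Wang for that estimate.
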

 
\begin{proof}
It suffices to prove the assertion for $\arv/M$ \cite[Proposition 4.2]{Arv07}. $\arv/M$ is unitarily equivalent, as a Hilbert module, to $M^\perp$, where the coordinate functions are given by compressing $Z_1, \ldots, Z_d$ to $M^\perp$.  

Let $P$ be the orthogonal projection onto $M^\perp$. Denote $B_i = P Z_i \big|_{M^\perp}$. Fix $i,j$ and $p>\dim(M)$. What we need to prove is that
\bes
[B_i,B_j^*] = B_iB_j^* - B_j^*B_i \in \cL^p .
\ees
We know that $\|[Z_i,Z_j^*]  \big|_{H_n} \| \leq \frac{2}{n+1}$ \cite[Proposition 5.3]{Arv98}, therefore
\bes
\textrm{trace}(|P[Z_i,Z_j^*]P|^p) \leq \sum_n \frac{2 \dim(M_n^\perp)}{(n+1)^p} < \infty .
\ees
Thus it is equivalent to show that $[B_i,B_j^*] - P[Z_i,Z_j^*]P$ is in $\cL^p$. But
\bes
[B_i,B_j^*] - P[Z_i,Z_j^*]P = PZ_iPZ_j^*P - PZ_j^*PZ_iP - PZ_iZ_j^*P + PZ_j^*Z_i P  = PZ_j^* (I - P) Z_i P ,
\ees 
where we used $PZ_j^*P = Z_j^*P$ ($M^\perp$ is coinvariant). Letting $E_n$ denote the orthogonal projection $E_n : \arv \rightarrow H_n \otimes \mb{C}$, and putting $P_n = E_nP$, then we may write
\bes
PZ_j^* (I - P) Z_i P = \sum_n P_n Z_j^* (E_{n+1}-P_{n+1})Z_i P_n. 
\ees

The proof will be complete once we show that 
\be\label{eq:asnormestimate}
\|P_n Z_j^* (E_{n+1}-P_{n+1})\| \leq C(n+1)^{-1/2} ,
\ee
with $C$ independent of $n$. Indeed, this would imply that 
\bes
\textrm{trace}(|PZ_j^* (I - P) Z_i P|^p) \leq C' \sum_n \frac{n^{\dim(M)-1}}{(n+1)^{-p}} 
\ees 
(here, $C'$ is some other constant) which is finite for $p>\dim(M)$.

Let $F = \{f_1, \ldots, f_k\}$ be a stable generating set for $M$. Let $m$ be the maximal degree of an element in $F$. Modifying $F$ if needed, we may assume that $F \subset M_m$ is a stable generating set for $M_m + M_{m+1} + \ldots$. 

Now consider $n \geq m$, and let $h \in M_{n+1}$. Because $F$ is a stable generating set, we write 
$h = a_1 f_1 + \ldots + a_k f_k$, with $\sum_i \|a_i f_i\| \leq C\|h\|$.
Recalling that $Z^*_j\big|_{H_{n+1}} = (n+1)^{-1} \frac{\partial}{\partial z_j}$, we get
\bes
Z^*_j h = \frac{1}{n+1}\Big( \sum_{i=1}^k a_i  \frac{\partial}{\partial z_j} f_i + \sum_{i=1}^k f_i \frac{\partial}{\partial z_j} a_i \Big) ,
\ees
so, because $M$ is a submodule, 
\bes
P_n Z_j^* h = \frac{1}{n+1}\sum_{i=1}^k a_i  \frac{\partial}{\partial z_j} f_i  .
\ees
By \cite[Proposition 2.3]{GuoWang} there is a constant $C_1$ such that $\|g \partial/\partial z_j f_i\| \leq C_1\sqrt{n+1}\|gf_i\|$ for $i=1, \ldots, k$, and we get
\begin{align*}
\|P_n Z_j^* h\| &\leq  \frac{1}{n+1}\sum_{m=1}^k \|a_m  \frac{\partial}{\partial z_j} f_m\| \\
&\leq  \frac{C_1 \sqrt{n+1}}{n+1}\sum_{m=1}^k \|a_m  f_m\| \\
&\leq  \frac{C C_1 \|h\|}{\sqrt{n+1}}.
\end{align*}
That establishes (\ref{eq:asnormestimate}), and completes the proof of the theorem.
\end{proof}

Using the theorem together with the results of Section \ref{sec:H^2}, we obtain a unified proof for the following known results:

\begin{theorem}[Guo-Wang \cite{GuoWang}]
Every principal graded submodule $M \subseteq \arv$, as well as its quotient, is $p$-essentially normal for all $p>d$. $\arv/M$ is $p$-essentially normal for $p>\dim(M)$.
\end{theorem}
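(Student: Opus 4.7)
The plan is to obtain this theorem as an immediate corollary of Theorem \ref{thm:stabdivessnorm}. The only thing to verify is that every principal graded submodule of $\arv$ enjoys the stable division property, a point already flagged in the introductory discussion of Section 1.2.

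First, I would set up the situation: let $M \subseteq \arv$ be a principal graded submodule, so that $M$ is the closure of the algebraic submodule $\cA_d \cdot f$ for some homogeneous $f \in \cA_d \otimes \mb{C}^r$. The natural candidate for a stable generating set is the singleton $\{f\}$ itself.

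Second, I would check that this singleton is indeed a stable generating set with optimal constant $C = 1$. By definition, every element $h$ of the dense algebraic submodule $\cA_d \cdot f \subset M$ can be written as $h = a f$ for some $a \in \cA_d$. Since there is only one term in the decomposition, the inequalities (\ref{eq:stablediv1}) and (\ref{eq:stablediv2}) degenerate into the trivial identity $\|a f\| = \|h\|$. By the remark following the definition of stable division, it is enough to check the condition on the algebraic (homogeneous) part, so the stable division property holds with $C = 1$.

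Third, I would apply Theorem \ref{thm:stabdivessnorm} directly: having verified that $M$ has the stable division property, we conclude that $M$ and $\arv/M$ are $p$-essentially normal for all $p > d$, and that $\arv/M$ is $p$-essentially normal for all $p > \dim(M)$, which is exactly the statement to be proved. There is no real obstacle here; the only minor point worth noting is that in the vector-valued case the coefficient $a$ in $h = af$ need not be unique, but since we only need existence and not uniqueness, this is immaterial. The whole content of the theorem is thus packaged into the preparatory Theorem \ref{thm:stabdivessnorm}, illustrating the promised unifying role of the stable division framework.
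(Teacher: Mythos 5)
Your proposal is correct and follows exactly the route the paper intends: the paper states this theorem as an immediate consequence of Theorem \ref{thm:stabdivessnorm}, having already observed in Section 1.2 that any principal submodule trivially has the stable division property with constant $C=1$ because the one-term decomposition $h=af$ makes $\sum\|a_if_i\|^2=\|h\|^2$. (Incidentally, the coefficient $a$ in $h=af$ is in fact unique even in the vector-valued case, since a nonzero coordinate of $f$ is a non-zero-divisor in $\cA_d$, but as you note this plays no role.)
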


\begin{theorem}[Guo-Wang \cite{GuoWang}]
Every homogeneous ideal $I$ in $H^2_2$, as well as its quotient, are $p$-essentially normal for $p>2$.
\end{theorem}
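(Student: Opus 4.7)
The plan is to observe that this statement is an immediate corollary of two results already established in the paper, and that essentially no further work is required. First I would note that, by the definition of a graded submodule of $H^2_d$ given in the introduction, any homogeneous ideal $I$ in $H^2_2$ is the completion (with respect to the $H^2$ norm) of a homogeneous ideal $I_0 \subseteq \cA_2 = \mb{C}[x,y]$. Then Theorem \ref{thm:d=2} applies directly to $I_0$ and produces a stable generating set $\{f_1, \ldots, f_k\}$ together with a constant $C$ witnessing the stable division property of $I_0$ with respect to the $H^2$ norm.

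Having established the stable division property, I would invoke Theorem \ref{thm:stabdivessnorm} with $d=2$ and $r=1$: the completion $I$ is then a graded Hilbert submodule of $H^2_2$ with the stable division property, so both $I$ and the quotient $H^2_2/I$ are $p$-essentially normal for every $p > d = 2$. This yields the statement exactly as written. (One could also extract the stronger refinement for the quotient, namely $p$-essential normality for $p > \dim(I)$, though this is not claimed in the statement.)

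Since both ingredients are already proved, there is no genuine obstacle here; the only point that merits a sentence of verification is that the "homogeneous ideal in $H^2_2$" appearing in the statement fits the framework of Theorem \ref{thm:stabdivessnorm}, i.e., is a graded Hilbert submodule in the sense of the introduction. This is immediate from the fact that the finite-dimensional homogeneous components $I_n = I \cap H_n$ of $I$ are determined by the purely algebraic ideal $I_0 = \bigoplus_n I_n \subseteq \cA_2$, and $I$ is the $H^2$-closure of $I_0$.
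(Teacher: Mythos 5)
Your proof is correct and is precisely the argument the paper intends: the theorem is stated immediately after Theorem \ref{thm:stabdivessnorm} as a direct consequence of combining that theorem with Theorem \ref{thm:d=2} from Section \ref{sec:H^2}. No further commentary is needed.
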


\begin{theorem}[Arveson \cite{Arv05}, Douglas \cite{Doug06}]
Let $f_1, \ldots, f_k$ be homogeneous vector valued polynomials of the same degree $m$, all of which are monomials. Then the module $M$ generated by $\{f_1, \ldots, f_k\}$, as well as its quotient, are essentially $p$-normal for all $p>d$. $\arv/M$ is $p$-essentially normal for $p>\dim(M)$.
\end{theorem}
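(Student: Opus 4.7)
The plan is to observe that this theorem follows immediately by combining two results already established in the paper: the stable division property for monomial modules (Proposition \ref{prop:monomials}) and the general principle that stable division implies essential normality (Theorem \ref{thm:stabdivessnorm}).

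First, I would invoke Proposition \ref{prop:monomials}: since $M$ is generated by the homogeneous vector-valued monomials $f_1, \ldots, f_k$, it has the stable division property (indeed, with optimal constant $C=1$). Note that this proposition applies to modules generated by monomials $z^\alpha \otimes \xi$ with $\xi \in \mb{C}^r$, which is exactly the situation here. In particular, $M$ (viewed as a graded submodule after completing in the $H^2$-norm) is a graded Hilbert submodule of $\arv$ possessing a stable generating set consisting of degree-$m$ monomials.

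Second, I would feed this into Theorem \ref{thm:stabdivessnorm}. That theorem asserts that any graded Hilbert submodule of $\arv$ with the stable division property is $p$-essentially normal for $p>d$, with its quotient $\arv/M$ being $p$-essentially normal even for $p>\dim(M)$. Applying this directly to our $M$ yields both statements of the theorem: both $M$ and $\arv/M$ are $p$-essentially normal for $p>d$, and the sharper bound $p>\dim(M)$ holds for the quotient.

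In this sense, there is no genuine obstacle left at this stage — the work has already been done in Sections \ref{sec:H^2} and \ref{sec:stab_div_ess_nor}. The main interest of stating this theorem here is not the proof itself but rather the observation that the Arveson–Douglas result for monomial modules fits naturally into the unified framework of stable division, producing the same (indeed, in the quotient case, sharper) conclusion as the original ad hoc arguments in \cite{Arv05,Doug06}.
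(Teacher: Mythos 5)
Your proposal is correct and is exactly the argument the paper intends: the theorem is stated as a corollary of combining Proposition \ref{prop:monomials} (monomial modules have the stable division property) with Theorem \ref{thm:stabdivessnorm} (stable division implies $p$-essential normality). Nothing to add.
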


\begin{remark}
\emph{In a previous version of this note it was only asserted that $\arv/M$ is $p$-essentially normal for $p>d$, rather than for $p>\dim(M)$. It was noticed that the proof gives the stronger result thanks to a correspondence with J\"org Eschmeier.}
\end{remark}

\section{Reduction from linear submodules of $H^2_d \otimes \mb{C}^r$ to quadratic submodules of $H^2_d$.}

The purpose of this section is to show that the problem of showing the $p+r$-essential normality of linear submodule of $\arv$ can be reduced to the problem of showing $p$-essential normality of quadratic submodules of $H^2_d$. 
The motivation for this reduction is, of course, Arveson's result that if every homogeneous submodule $M$ of $\arv$ that is generated by linear polynomials is essentially normal, then every graded submodule of $\arv$ (as well as its quotient) is essentially normal \cite[Corollary 8.4]{Arv07}\footnote{We note that it appears that the same proof given in \cite{Arv07} would give the same result for $p$-essential normality.}.

\noindent{\bf Statement:} \emph{If it is true that every homogeneous ideal in $H^2_d$ that is generated by quadratic polynomials is $p$-essentially normal for $p>d$, then every homogeneous submodule of $\arv$ that is generated by linear polynomials is $p$-essentially normal for all $p> d+r$. Similarly, if it is true that every homogeneous ideal in $H^2_d$ that is generated by quadratic polynomials is essentially normal, then every homogeneous submodule of $\arv$ that is generated by linear polynomials is essentially normal.}

\begin{proof}
We prove the statement about $p$-essential normality. The statement about essential normality is proved in a similar way. Fix $p>d+r$.
Write the $d$-dimensional variable as $z = (z_1, \ldots, z_d)$, and denote the coordinate operators by 
$S_1, \ldots, S_d$. Put $T_i = S_i \big|_{M}$, $i=1, \ldots, d$.

Let $M\subseteq \arv$ be generated by polynomials of degree $1$. Let $\{v_1, \ldots, v_r\}$ denote an orthonormal basis in $\mb{C}^r$. Let the generators $\{f_1, \ldots, f_k\}$ of $M_1$ be given by 
\bes
f_m(z) = \sum_{i,j}a^{m}_{ij} z_i v_j.
\ees

Now, consider the space $H^2_{d+r}$, with the $(d+r)$-dimensional variable written as $(z,y) = (z_1,\ldots, z_d, y_1, \ldots, y_r)$. We denote the coordinate operators of $H^d_{d+r}$ by $Z_1, \ldots, Z_d, Y_1, \ldots, Y_r$. Note that there is a difference between the tuples $(S_1, \ldots, S_d)$ and $(Z_1, \ldots, Z_d)$ - they are acting on different spaces and in a different way.
Define $k$ quadratic forms $g_1, \ldots, g_k$ by
\bes
g_m(z,y) = \sum_{i,j}a^{m}_{ij} z_i y_j.
\ees
Let $N$ be the graded Hilbert submodule of $H^2_{d+r}$ generated by $\{g_1, \ldots, g_k\}$. By assumption, $N$ is $p$-essentially normal.  In particular, letting $A_i = Z_i \big|_{N}$, we have that
\bes
A_i A_j^* - A_j^* A_i \in \cL^p \,\, , \,\, i,j = 1, \ldots, d .
\ees

Now, let $\cA$ be $\mb{C}[z_1, \ldots, z_d]$, considered as the subalgebra of $\mb{C}[z_1, \ldots, z_d, y_1, \ldots, y_r]$ consisting of polynomials depending only on the $z_i's$. $N$ is also an $\cA$-module. Let $P$ be the completion of the $\cA$-submodule of $N$ generated by $\{g_1, \ldots, g_k\}$. Denote $B_i = A_i\big|_{P}$.

With all these definitions set up, the proof will now be completed in two steps. First, we will show that for all $i=1, \ldots, d$, $P$ reduces $A_i$. As this obviously implies that $[B_i,B_j^*]$ are also in $\cL^p$, the second and final step will be to show that $p$-essential normality of $[B_i,B_j^*]$ implies $p$-essential normality of $[T_i,T_j^*]$.

\noindent {\bf 1. $P$ reduces $A_i$:}

$P$ is invariant for $A_i$ by definition. We need to show that $N \ominus P$ is also invariant under $A_i$. But $P$ consists of all polynomials in $N$ in which the $y$ variables appear in any term with degree precisely one. Thus $N \ominus P$ certainly contains the space of all polynomials in which the $y$ variables appear with degree strictly greater then $1$. Call this space $Q$. But $P + Q = N$, hence $N \ominus P = Q$. The definition of $Q$ as the space of polynomials in which the $y$ variables appear with degree strictly greater then $y$ implies that it is invariant under multiplication by $z_i$, i.e., it is invariant under  the operator $A_i$.

\noindent{\bf 2. $p$-essential normality of $[B_i,B_j^*]$ implies $p$-essential normality of $[T_i,T_j^*]$:} 

Let $R$ be the completion of the $\cA$-submodule of $H^2_{d+r}$ generated by $\{y_1, \ldots, y_r\}$. $R$ can be equivalently defined as
\bes
R = \{f\in H^2_{d+r} : \forall z,y,\lambda . f(z,\lambda y) = \lambda f(z,y)\}.
\ees
Define $U: H^2_d \otimes \mb{C}^r \rightarrow R$ on monomials by
\bes
U (z^{\alpha} v_j) = \sqrt{1+|\alpha|} z^\alpha y_j.
\ees
Using the formula
\bes
\|z^\alpha \|^2 = \frac{\alpha_1 ! \cdots \alpha_d !}{|\alpha|!},
\ees
one sees that $U$ extends to a unitary. From our definitions it follows that $U$ maps $M$ onto $P$. A simple computation shows:
\be\label{eq:almostunitary}
U^* Z_i U (z^{\alpha} v_j) = \sqrt{\frac{|\alpha|+1}{|\alpha| +2 }} S_i (z^\alpha v_j).
\ee
Let $D$ be the graded operator of degree $0$ on $H^2_d \otimes \mb{C}^r$, acting on the space of homogeneous polynomials of degree $n$ as multiplication by $\sqrt{n+1}/\sqrt{n}$. Then we can rewrite (\ref{eq:almostunitary}) as
\bes
D U^* B_i U =  T_i .
\ees

Further computations show that 
\bes
DU^* B_i U= D' U^* B_i U D  ,
\ees
where $D'$ is the operator that multiplies homogeneous polynomials of degree $n \geq 2$ by $\sqrt{(n-1)(n+1)}/n$. Now,
\begin{align*}
T_i T_j^* - T_j^* T_i &= D U^*B_i U  U^* B_j^* U D -  U^* B_j^* U D D U^*B_i U \\
&= D U^* B_i B_j^* U D -  DU^* B_j^* U^* D'^2 U B_i U D \\
&= D U^* B_i B_j^* U D -  DU^* B_j^* B_i U D    + DU^*  B_j^* U^*(I-D'^2)U B_i U D.
\end{align*}
Now, $D U^* B_i B_j^* U D -  DU^* B_j^* B_i U D  = D U^* [B_i, B_j^*] U D \in \cL^p$. On the other hand, $I - D'^2$ is the operator that multiplies the homogeneous polynomials of degree $n$ by $1 - (n-1)(n+1)/{n^2} = 1/{n^2}$, and it is not hard to see that this operator is in $\cL^q$ for all $q>d/2$. But $p>d+r$, so $DU^*  B_j^*U^*(I-D'^2)U B_i U D \in \cL^p$, and we are done.
\end{proof}

\section{Concluding remarks}

The problem of determining whether every homogeneous ideal in $\cA_d$ has the stable division property remains open. Besides being a compelling problem in its own right, and in addition to being directly related to questions of numerical stability in computational algebraic geometry, the consequence to essential normality of Hilbert modules serves as a great motivation for solving this problem. By the result of the previous section, it is already interesting to solve this problem for ideals generated by quadratic forms. But it is possible that even this problem is too hard to solve. 

The notion of stable division can be weakened  in several ways. One of these ways is to allow for \emph{approximate stable division}. That is, instead of requiring 
\bes
\sum_{i=1}^k a_i f_i = h
\ees
with $\sum \|a_i f_i\| \leq C \|h\|$, one requires only 
\bes
\|\sum_{i=1}^k a_i f_i - h \| \leq cn^{-1/2} \|h\|
\ees
and $\sum \|a_i f_i\| \leq C \|h\|$, where $n$ is the degree of $h$. It is then easy to see that, under the assumption of \emph{approximate} stable division, the proof of Theorem \ref{thm:stabdivessnorm} goes through. One can also allow for $C$ to be a slowly growing function of $n:= \deg h$, although that may affect the interval of those $p$ for which $p$-essential normality is shown (for example $C  \leq cn^{1/2-\epsilon}$ would still give $p$-essential normality for sufficiently large $p$, $C \leq c\log n$ would give $p$-essential normality for the same $p$'s, etc.). In fact, an analysis of the proof of Theorem \ref{thm:stabdivessnorm} shows that we may also allow the generating set to vary and in fact to having (slowly) growing degree.
These weakened notions of stable division are perhaps what one might hope to prove in order to establish Arveson's conjecture in general. 

Recently, J\"org Eschmeier developed a different approach to the problem of essential normality \cite{E10}. His approach is related to ours, but somewhat different in spirit. He showed that if an ideal $I$ is generated by homogeneous polynomials $\{f_1, \ldots, f_k\}$ of degree $m$, such that 
\be\label{eq:Eschmeier}
\|P_{I^\perp}\sum_{i=1}^k a_i  \frac{\partial}{\partial z_j} f_i  \| \leq C\sqrt{n} \|\sum_{i=1}^k a_i  f_i  \| 
\ee
holds for all $a_1, \ldots, a_d \in H_{n-m}$, then $H^2_d/I$ is $p$-essentially normal for all $p>\dim(I)$. He also showed that if $\{f_1, \ldots, f_k\}$ is a stable generating set for $I$, then $I$ has the above property.

\appendix
\section{The division algorithm and Groebner bases}\label{sec:Grobner}

We will use the notation of \cite{CLO92}, which is our main reference for the material reviewed in this section (see also \cite{CLO98,BW}). 

\subsection{Monomial orders}
The monomial order that we will use is the graded lexicographic order, which agrees with the usual lexicographic order on the space of homogeneous polynomials of a certain degree. Unless stated otherwise, we set that $z_1 > z_2 > \ldots > z_d$.
That is, for any $a,b \neq 0$, $a z^\alpha > b z^\beta$ if either $|\alpha| > |\beta|$, or $|\alpha| = |\beta|$ and the 
first non-zero entry in $\alpha - \beta$ is positive.

Given $p = \sum_\alpha c_\alpha z^\alpha \in \cA_d$, the \emph{leading term} of $p$, denoted $LT(p)$, is the monomial $c_\beta z^\beta$ appearing in $p$ that satisfies $c_\beta z^\beta > c_\alpha z^\alpha$ for all $\alpha \neq \beta$ such that $c_\alpha \neq 0$. 

\subsection{The division algorithm}\label{subsec:div_alg}
We now review the standard division algorithm given in \cite[p. 63]{CLO92} (which is identical to its finite multiplicity counterpart in \cite[p. 202]{CLO98}). In certain cases, once we carefully choose the order in which division is carried, we can prove that this algorithm implements stable division. 

Given polynomials $f_1, \ldots, f_k$ and another polynomial $h$, the purpose of this algorithm is to 
divide $h$ by $f_1, \ldots, f_k$ with remainder,  i.e., to exhibit $h$ as 
\bes
h = \sum_i a_i f_i + r,
\ees
where $a_1, \ldots, a_k$ are polynomials and $r$ is a polynomial that is to be considered as the ``remainder".

\noindent {\bf Algorithm I}

Given and ordered $k$-tuple $(f_1, \ldots, f_k)$ of polynomials and a polynomial $h$, set $a_1= \ldots = a_k = r = 0$, and set $p = h$.
While $p \neq 0$, execute the following steps:
\begin{enumerate}
\item If $p = 0$, then terminate and return the current values of $a_1, \ldots, a_k$ and $r$.
\item Set $I:= \{i | LT(f_i) \textrm{ divides }LT(p)\}$.
\item If $I = \emptyset$, put $r := r + LT(p)$, $p:= p - LT(p)$, and return to step (1); otherwise:
\item\label{it:choice} Choose by some method $i_0 \in  I$.  
\item\label{it:reduce} Put $a_{i_0} := a_{i_0} + LT(p)/LT(f_{i_0})$ and $p := p - (LT(p)/LT(f_{i_0}))f_{i_0}$, and return to step (1).
\end{enumerate}

Note that at the end of every iteration of the algorithm
\bes
h = \sum_{i=1}^k a_i f_i + p + r.
\ees

Here is a different version of the division algorithm, that is a little less intuitive but a little more flexible than the above one \cite[p. 199]{BW}.

\noindent {\bf Algorithm II}

Given and ordered $k$-tuple $(f_1, \ldots, f_k)$ of polynomials and a polynomial $h$, set $a_1= \ldots = a_k = 0$ and set $p = h$.
While there is a term $t$ in $p$ such that $p$ is divisible by one of the $LT(f_i)$'s, execute the following steps:
\begin{enumerate}
\item\label{it:ChooseTerm} Let $t$ be any term of $p$, chosen by some method, such that one of the $LT(f_i)$'s divides $t$.
\item Set $I:= \{i | LT(f_i) \textrm{ divides } t\}$.
\item\label{it:choiceII} Choose by some method $i_0 \in  I$.  
\item\label{it:reduceII} Put $a_{i_0} := a_{i_0} + t/LT(f_{i_0})$ and $p := p - (t/LT(f_{i_0}))f_{i_0}$, and return to step (1).
\end{enumerate}
When there are no more terms in $p$ that are divisible by any of the $LT(f_i)$'s, terminate and return the $a_i$'s and $r:=p$.

When organized this way, at every iteration of the algorithm we have
\bes
h =  \sum_{i=1}^k a_i f_i + p.
\ees

\subsection{Groebner bases}

Given an ideal $I \subseteq \cA_d$, a set $\{f_1, \ldots, f_k\}$ is called a \emph{basis} for $I$ if the ideal generated by $\{f_1, \ldots, f_k\}$ is $I$. The set $LT(I) = \{LT(p) : p \in I\}$ is always an ideal, and the set $\{f_1, \ldots, f_k\}$ is said to be a \emph{Groebner basis} for $I$ if the ideal generated by $\{LT(f_1), \ldots, LT(f_k)\}$ is $LT(I)$. It is a fact that every ideal has a Groebner basis, and that a Groebner basis is a basis. Moreover, if $\{f_1, \ldots, f_k\}$ is a Groebner basis for $I$, then when either one of the division algorithms is run with $h \in I$ and $(f_1, \ldots, f_k)$ as the dividing $k$-tuple, then the remainder is zero.


\bibliographystyle{amsplain}

\end{document}